\documentclass[11pt]{amsart}% {amsproc}

\usepackage{graphicx}
\usepackage{amssymb}
\usepackage{epstopdf}
\DeclareGraphicsRule{.tif}{png}{.png}{`convert #1 `dirname #1`/`basename #1.tif`.png}
\usepackage{amsmath,amsthm,amscd,amssymb, mathrsfs}

\usepackage[colorlinks,citecolor=red,pagebackref,hypertexnames=false]{hyperref}

\numberwithin{equation}{section}
\numberwithin{figure}{section}

\theoremstyle{plain}
\newtheorem{theorem}{Theorem}[section]
\newtheorem{lemma}[theorem]{Lemma}
\newtheorem{corollary}[theorem]{Corollary}
\newtheorem{proposition}[theorem]{Proposition}
\newtheorem{conjecture}[theorem]{Conjecture}

\theoremstyle{definition}
\newtheorem{definition}[theorem]{Definition}

\newtheorem{problem}[theorem]{Problem}

\theoremstyle{remark}

\newtheorem{case[theorem]}{Case}

\title[The $k$-resultant modulus set problem]{ The $k$-resultant modulus set problem on  algebraic varieties over finite fields }

\author{David Covert, Doowon Koh, and Youngjin Pi}

%\address{ 
%Department of Mathematics\\
%Chungbuk National University \\
%Cheongju city, Chungbuk-Do 361-763 Korea}
%\email{ koh131@chungbuk.ac.kr }

\keywords{ Erd\H{o}s-Falconer distance problem, Energy estimate,  $k$-resultant modulus sets }
\thanks{The second listed author was supported by the Basic Science Research Program through the National
Research Foundation of Korea funded by the Ministry of Education, Science and Technology
 (NRF-2015R1A1A1A05001374).}

\subjclass[2010]{Primary: 42B05; Secondary 43A32, 43A15 }

\begin{document}

\begin{abstract} We study the $k$-resultant modulus set problem in the $d$-dimensional vector space $\mathbb F_q^d$ over the finite field $\mathbb F_q$ with $q$ elements. Given $E\subset \mathbb F_q^d$ and an integer $k\ge 2$, the $k$-resultant modulus set, denoted by $\Delta_k(E)$, is defined as
$$ \Delta_k(E)=\{\|x^1\pm x^2 \pm \cdots \pm x^k\|\in \mathbb F_q: x^j\in E, ~j=1,2,\ldots, k\},$$
where $\|\alpha\|=\alpha_1^2+\cdots+ \alpha_d^2$ for $\alpha=(\alpha_1, \ldots, \alpha_d) \in \mathbb F_q^d.$
In this setting, the $k$-resultant modulus set problem is to determine the minimal cardinality of $E\subset \mathbb F_q^d$ such that $\Delta_k(E) = \mathbb F_q$ or $\mathbb{F}_q^*$.  This problem is an extension of the Erd\H{o}s-Falconer distance problem.  In particular, we investigate the $k$-resultant modulus set problem with the restriction that the set $E\subset \mathbb F_q^d$  is contained in a specific algebraic variety.  Energy estimates play a crucial role in our proof. 
\end{abstract}

\maketitle
%\vspace{.4in}
%\tableofcontents
%\setstretch{1.25}

\section{Introduction}
Let $g_d(N)$ denote the minimal number of distinct distances between $N$ distinct points in Euclidean space $\mathbb{R}^d$.  In 1946, Paul Erd\H os (\cite{Er46}) conjectured that $g_2(N) \gtrsim N/\sqrt{\log N}$, and $g_d(N) \gtrsim N^{2/d}$ for $d \geq 3$.  Here and throughout, we use the notation $X \gtrsim Y$ if there exists a positive constant $c$ such that $X \geq cY$.  Furthermore, we use $X \approx Y$ if $X \gtrsim Y$ and $Y \gtrsim X.$
% and we use $X \gtrapprox Y$ if for every $\epsilon > 0$, there exists a positive constant $C_{\epsilon}$ such that $X \geq C_{\epsilon} q^{\epsilon} Y$.
 Erd\H os' conjecture arose from considering $N$ points arranged on regular polygons and subsets of the integer lattice $\mathbb{Z}^d$.  Guth and Katz (\cite{GuKa10}) recently established the sharp exponent for dimension two. More precisely, they proved that $g_2(N) \gtrsim N/\log{N}.$  In higher dimensions, Solymosi and Vu (\cite{SV08}) obtained the best known bound $g_d(N) \gtrsim N^{2/d - 2/(d^2+2d)}$ for $d \ge 3$, results which are far from the conjecture. \\

For $E \subset \mathbb{F}_q^d$, we define the distance set of $E$ to be 
$$\Delta(E) = \{\| x - y\|\in \mathbb F_q : x ,y \in E\},$$
where $\| (x_1, \dots , x_d) \| = x_1^2 + \dots + x_d^2$.  Bourgain, Katz, and Tao were the first to consider such an analog of the distance problem.  They proved (\cite{BKT04}) that if $E \subset \mathbb{F}_p^2$, where $p \equiv 3 \pmod{4}$ is prime, and given $\delta > 0$ such that $q^{\delta} \lesssim |E| \lesssim q^{2 - \delta}$, then there exists $\epsilon = \epsilon(\delta)$ such that $|\Delta(E)| \geq |E|^{1/2 + \epsilon}$.  Their proof involved finding a relationship between incidence geometry in $\mathbb{F}_p^2$ and the distance set.  Unfortunately, the relationship between $\delta$ and $\epsilon$ is difficult to deduce from their proof.  Additionally, this formulation of the finite field distance problem leaves open a few degenerate possibilities.  For example, if $p \equiv 1 \pmod{4}$, then there is an element $i \in \mathbb{F}_p$ such that $i^2 = -1$, and if $d = 2k$ is even, then we could consider $E = \{(x_1, ix_1, \dots , x_k,ix_k) : x_i \in \mathbb{F}_p \} \subset \mathbb{F}_p^{d}$.  In this case, $|E| = p^{d/2}$, and yet $\Delta(E) = \{0\}$.  Similar examples could be considered when the dimension $d$ is odd.  Furthermore, note that if $E = \mathbb{F}_p^2$, then $|\Delta(E)| = p = |E|^{1/2}$ with no exponential gain.\\

In order to circumvent these degenerate cases, it is helpful to recast the distance set problem, and we use the Falconer distance problem as our motivation.   For $E \subset \mathbb{R}^d$, define $\Delta(E) = \{|x - y| : x ,y \in E\} \subset \mathbb{R}$.  Falconer showed (\cite{Fa85}) that if $E \subset [0,1]^d$ is compact and has Hausdorff dimension $\dim(E) > \frac{d+1}{2}$, then the distance set $\Delta(E)$ has positive one-dimensional Lebesgue measure.  He also constructed a set with Hausdorff dimension $d/2$ that had measure zero.  This led him to the conjecture that if a compact set $E \subset [0,1]^d$ has Hausdorff dimension $\dim(E) > d/2$, then the associated distance set must have positive Lebesgue measure.  See \cite{Er05, Wo99} for the most recent progress on the problem.  In this spirit, Iosevich and Rudnev (\cite{IR07}) restated the finite field distance question as follows.
\begin{problem}\label{EFDP}
Let $E \subset \mathbb{F}_q^d$.  Find the minimal exponent $\alpha$ such that if $|E| \geq Cq^{\alpha}$ for a sufficiently large constant $C$, then we have $\Delta(E) = \mathbb{F}_q$.
\end{problem}
Also, we could consider a weakened version of the problem.

\begin{problem}\label{weakEFDP}
Let $E \subset \mathbb{F}_q^d$.  Find the minimal exponent $\beta$ such that if $|E| \geq C q^{\beta}$, then there exists a constant $0 < c \leq 1$ such that $|\Delta(E)| \geq cq$.
\end{problem}
Collectively, we refer to Problems \ref{EFDP} and \ref{weakEFDP} as the Erd\H os-Falconer Distance Problem.  We will write $\alpha_2(d)$ to denote the smallest exponent $\alpha$ that solves Problem \ref{EFDP}, and we write $\beta_2(d)$ to denote the smallest exponent $\beta$ that solves Problem \ref{weakEFDP}, so that $\beta_2(d) \leq \alpha_2(d)$.  Note that if $q = p^2$, then $\mathbb{F}_q$ contains a subfield isomorphic to $\mathbb{F}_p$, and hence we can take $E$ isomorphic to $\mathbb{F}_p^d \subset \mathbb{F}_q^d$ yielding a set such that $|E| = q^{d/2}$ and still $|\Delta(E)| = \sqrt{q}$.  Likewise, if $-1$ is a square and the dimension $d$ is even, we could take the same degenerate example as before: $E = \{(x_1, ix_1, \dots , x_{d/2}, i x_{d/2}) : x_i \in \mathbb{F}_q\}$ yields $|\Delta(E)| = 1$.  Thus, we have $\alpha_2(d) \geq d/2$.  Iosevich and Rudnev (\cite{IR07}) showed that we have $\alpha_2(d) \leq \frac{d+1}{2}$.  At first blush it is plausible that $\alpha_2(d) = d/2$ in all dimensions, in line with Falconer's conjecture.  However, the authors in \cite{HIKR10} showed that $\beta_2(d) = (d+1)/2$, at least for odd dimensions $d \geq 3$.  It is still possible that $\alpha_2(d) = d/2$ when $d$ is even, but there has been no further progress in this direction.  The only progress that has been made (\cite{BHIPR14, CEHIK09}) is the bound $\beta_2(2) \leq 4/3$.  Further progress has proved very difficult, indeed.\\

Rather than dealing with the distance $\| x - y\| \in \mathbb{F}_q$, the authors (\cite{CKP14}) studied  the quantity $\| x^1 \pm \dots \pm x^k \|\in \mathbb{F}_q$ for $x^i\in \mathbb F_q^d$ and an integer $k\ge 2.$  Since our results are independent of the sign $\pm$, we simply define
\begin{equation}\label{eq1.1} \Delta_k(E) = \{ \| x^1 -x^2- \cdots - x^k \| : x^i \in E\} \subset \mathbb{F}_q, \end{equation}
and we call this the $k$-resultant set of $E$.  We may think of $\Delta_2(E)$ as the distance set, and then $\Delta_k(E)$ is a generalization of the distance set.  The questions we ask regarding the distribution of $\Delta_k(E)$ are similar to  those asked in the Erd\H os-Falconer Distance Problem.
\begin{problem}
Let $E \subset \mathbb{F}_q^d$.  Find the minimal exponent $\alpha$ such that if $|E| \geq Cq^{\alpha}$ for a sufficiently large constant $C$, then we have $\Delta_k(E) = \mathbb{F}_q$.  Find the minimal exponent $\beta$ such that if $|E| \geq Cq^{\beta}$, then there exists a constant $0 < c \leq 1$ such that $|\Delta_k(E)| \geq cq$.
\end{problem}
In line with our earlier notation, we define $\alpha_k(d)$ to be the smallest exponent $\alpha$ such that $|E| \geq Cq^{\alpha}$ implies $\Delta_k(E) = \mathbb{F}_q$, and we put $\beta_k(d)$ to be the smallest exponent $\beta$ such that $|E| \geq Cq^\beta$ implies $|\Delta_k(E)| \geq cq$ for some constant $0  < c \leq 1$.  We first show that $\alpha_k(d) = \frac{d+1}{2}$ when the dimension $d$ is odd for general $q$.

\begin{theorem} \label{sharpodd}  Suppose that $d\ge 3$ is odd and $-1\in \mathbb F_q$ is a square. Then we have
\[ \alpha_k(d) = \frac{d+1}{2} \quad \mbox{for all integers}~~k\ge 2.\]
\end{theorem}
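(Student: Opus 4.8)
The plan is to establish the two bounds $\alpha_k(d)\ge\frac{d+1}{2}$ and $\alpha_k(d)\le\frac{d+1}{2}$ separately; as is customary here, $q$ is odd. For the lower bound I want a large set $E$ with $\Delta_k(E)\ne\mathbb F_q$. Write $d=2\ell+1$ and fix $i\in\mathbb F_q$ with $i^2=-1$, which exists by hypothesis, and put
$$E=\{(a_1,ia_1,a_2,ia_2,\dots,a_\ell,ia_\ell,b):a_1,\dots,a_\ell,b\in\mathbb F_q\},$$
a subspace of $\mathbb F_q^d$ with $|E|=q^{\ell+1}=q^{(d+1)/2}$. For $x^1,\dots,x^k\in E$ the coordinates of $w=x^1-x^2-\cdots-x^k$ occur in pairs $(c_m,ic_m)$ together with a single last entry $e$, so $\|w\|=\sum_{m=1}^\ell\bigl(c_m^2+(ic_m)^2\bigr)+e^2=e^2$; hence $\Delta_k(E)=\{e^2:e\in\mathbb F_q\}$ is the set of squares, a proper subset of $\mathbb F_q$. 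Since for every $\alpha<\frac{d+1}{2}$ and every $C$ one has $|E|=q^{(d+1)/2}\ge Cq^{\alpha}$ for $q$ large, no exponent below $\frac{d+1}{2}$ can force $\Delta_k(E)=\mathbb F_q$, so $\alpha_k(d)\ge\frac{d+1}{2}$.

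For the upper bound, fix a nontrivial additive character $\chi$ of $\mathbb F_q$, let $\widehat f(\xi)=\sum_x f(x)\chi(-x\cdot\xi)$, and for $t\in\mathbb F_q$ set
$$\nu_k(t)=\#\{(x^1,\dots,x^k)\in E^k:\ \|x^1-x^2-\cdots-x^k\|=t\}.$$
It suffices to show $\nu_k(t)>0$ for every $t$ once $|E|\ge Cq^{(d+1)/2}$ with $C$ large. Let $r$ count representations of a vector in the form $x^1-x^2-\cdots-x^k$ with each $x^j\in E$, so that $\widehat r(0)=|E|^k$ and $|\widehat r(\xi)|=|\widehat E(\xi)|^k$, and let $S_t$ be the indicator of the sphere $\{y:\|y\|=t\}$. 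Then Plancherel gives
$$\nu_k(t)=\frac{|E|^k\,|S_t|}{q^d}+\frac{1}{q^d}\sum_{\xi\ne0}\widehat r(\xi)\,\overline{\widehat{S_t}(\xi)}.$$
The key estimate is the Gauss-sum evaluation of $\widehat{S_t}$: completing the square in each coordinate shows that for $\xi\ne0$ the transform $\widehat{S_t}(\xi)$ is $q^{-1}g^{d}$ times a Sali\'e-type sum $\sum_{s\ne0}\eta(s)\chi\bigl(-st-\|\xi\|/(4s)\bigr)$, where $g$ is the quadratic Gauss sum ($|g|=\sqrt q$) and $\eta$ the quadratic character; because $d$ is odd, the Sali\'e bound $\le 2\sqrt q$ gives $|\widehat{S_t}(\xi)|\lesssim q^{(d-1)/2}$ for all $\xi\ne0$, while $|S_t|=q^{d-1}+O(q^{(d-1)/2})\approx q^{d-1}$ (and $|S_0|=q^{d-1}$ exactly). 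Combining $|\widehat E(\xi)|^k\le|E|^{k-2}|\widehat E(\xi)|^2$ with $\sum_\xi|\widehat E(\xi)|^2=q^d|E|$ bounds the error term by $\lesssim q^{(d-1)/2}|E|^{k-1}$, whereas the main term is $\approx|E|^k/q$. Hence $\nu_k(t)\gtrsim|E|^k/q>0$ once $|E|\ge Cq^{(d+1)/2}$ with $C$ large, which yields $\alpha_k(d)\le\frac{d+1}{2}$ and completes the proof.

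The routine pieces — the explicit Gauss-sum identities, the precise lower-order term of $|S_t|$, the case $t=0$ (for which the cone has exactly $q^{d-1}$ points and the same bounds apply), and absorbing finitely many small $q$ into the constant — I would isolate as lemmas or quote from \cite{CKP14}. The one substantive input is the cancellation in the character sum defining $\widehat{S_t}(\xi)$: it is exactly the odd-dimensional Sali\'e sum estimate that produces the saving $q^{(d-1)/2}$ and places the threshold at $q^{(d+1)/2}$, matching the construction used for the lower bound; a crude $\ell^1$ bound on $\widehat{S_t}$ would be lossy. For Theorem \ref{sharpodd} the elementary $\ell^2$ identity $\sum_\xi|\widehat E(\xi)|^2=q^d|E|$ on the ``energy'' of $E$ is all that is needed; it is in the variety-restricted theorems later in the paper that this has to be replaced by the finer energy estimates advertised in the abstract.
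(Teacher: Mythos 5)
Your proof is correct and follows essentially the same route as the paper: the lower bound uses the identical construction $E=\{(t_1,it_1,\dots,t_n,it_n,s)\}$ with $|E|=q^{(d+1)/2}$ and $\Delta_k(E)$ equal to the set of squares. For the upper bound the paper simply cites Iosevich--Rudnev together with the monotonicity $\alpha_k(d)\le\alpha_2(d)$, whereas you write the argument out in full; but what you write is exactly the paper's own Lemma \ref{keylem} combined with the trivial estimate \eqref{trivialk} (main term $|E|^k|S_t|q^{-d}$ versus an error controlled by the sphere decay $q^{(d-1)/2}$ and the $L^2$ identity for $\widehat E$), so there is no substantive difference in method.
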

\begin{proof} 
In \cite{IR07}, Iosevich and Rudnev proved that $ \alpha_2(d) \le (d+1)/2$.  We note that for integers $k_1$ and $k_2$ such that $2 \leq k_1 \leq k_2$, we have $\alpha_{k_2}(d) \leq \alpha_{k_1}(d)$ and $\beta_{k_2}(d) \leq \beta_{k_1}(d)$.  Hence, $\alpha_k(d) \le \alpha_2(d)\le (d+1)/2$ for $k\ge 2.$
Therefore, assuming that $-1\in \mathbb F_q$ is a square, it suffices to prove that $ \alpha_k(d) \ge (d+1)/2$ for all integers $k\ge 2$ and odd $d\ge 3.$
Now,  if $d = 2n + 1\ge 3$ is odd, we can take
\begin{equation}\label{E}
E = \{(t_1, it_1, \dots , t_n, it_n, s) : t_i, s \in \mathbb{F}_q\}.
\end{equation}
Here, $|E| = q^{\frac{d+1}{2}}$, and yet
\[
\Delta_k(E) = \{\sigma^2 : \sigma \in \mathbb{F}_q\} \quad \mbox{for all integers}~~k\ge 2.
\]
Since there are only $\frac{q+1}{2} < q$ squares in $\mathbb{F}_q$, the result follows.\end{proof}

Thus we have shown that $\alpha_k(d) = (d+1)/2$ is sharp in odd dimensions $d \geq 3$.  On the other hand, if $d\ge 2$ is even, then sets like $E$ in \eqref{E} may not be constructed. Alternatively, if $-1$ is a square in $\mathbb{F}_q$ and $d\ge 2$ is even, then we may take the set $E= \{(t_1, i t_1, \dots , t_{d/2}, it_{d/2} : t_i \in \mathbb{F}_q\} \subset \mathbb F_q^d$ as before. In this case, we see that  $\beta_k(d) \ge d/2$ for all integers $k\ge 2$ and even $d\ge 2$, because $|E|=q^{d/2}$ and $|\Delta_k(E)|=|\{0\}|=1$ for all $k\ge 2.$ Likewise, it was shown in \cite{CEHIK09} that $\beta_2(d) = \frac{d+1}{2}$.  In view of these examples, the following conjecture was given by the authors in \cite{CKP14}. 
 
\begin{conjecture} If $d\ge 2$ is even, then 
$$ \beta_k(d)=\frac{d}{2} \quad \mbox{for all integers} ~~k\ge 2.$$
\end{conjecture}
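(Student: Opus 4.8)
A natural line of attack on the conjecture runs through an \emph{energy estimate}. For $t\in\mathbb F_q$ put
$$\nu_k(t)=\big|\{(x^1,\ldots,x^k)\in E^k:\ \|x^1-x^2-\cdots-x^k\|=t\}\big|,$$
so that $\sum_{t\in\mathbb F_q}\nu_k(t)=|E|^k$ and, by the Cauchy--Schwarz inequality,
$$|\Delta_k(E)|\ \ge\ \frac{|E|^{2k}}{\sum_{t\in\mathbb F_q}\nu_k(t)^{2}}\,.$$
Thus it would be enough to show that $|E|\ge Cq^{d/2}$ forces the \emph{$k$-resultant energy} $\sum_t\nu_k(t)^2$ to be $\lesssim |E|^{2k}/q$; this is the most one could hope for, since $E=\mathbb F_q^d$ already gives $\sum_t\nu_k(t)^2\approx |E|^{2k}/q$. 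Writing $r(u)$ for the number of $k$-tuples in $E$ with $x^1-x^2-\cdots-x^k=u$, one has $\widehat r(\xi)=\widehat E(\xi)\,\big(\overline{\widehat E(\xi)}\big)^{k-1}$, and expanding the condition $\|u\|=\|v\|$ through a nontrivial additive character $\chi$ of $\mathbb F_q$ together with the Gauss-sum evaluation of $\sum_{u\in\mathbb F_q^d}\chi\big(s\|u\|+u\cdot\xi\big)$ turns the desired bound $\sum_t\nu_k(t)^2\lesssim |E|^{2k}/q$, after the negligible pieces are discarded, into an estimate of the shape
$$\sum_{\substack{\xi,\zeta\in\mathbb F_q^d\setminus\{0\}\\ \|\xi\|=\|\zeta\|}}|\widehat E(\xi)|^{k}\,|\widehat E(\zeta)|^{k}\ \lesssim\ q^{d-1}\,|E|^{2k}.$$

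The first step would be to apply Cauchy--Schwarz on each ``sphere'' $\{\xi:\|\xi\|=\lambda\}$, whose cardinality is $O(q^{d-1})$, which reduces the display to an $L^{2k}$-type bound for $\widehat E$, namely $\sum_{\xi\ne 0}|\widehat E(\xi)|^{2k}\lesssim |E|^{2k}$. Since $\sum_\xi|\widehat E(\xi)|^2=q^d|E|$ by Plancherel, this in turn follows from any nontrivial decay bound on $\max_{\xi\ne 0}|\widehat E(\xi)|$, and the precise threshold on $|E|$ is dictated by how strong a decay one can afford: the exponent $2k$ makes the requirement weaker as $k$ grows, so the conjecture for large $k$ should be substantially more tractable than the case $k=2$ --- consistent with the fact, recorded in the proof of Theorem~\ref{sharpodd}, that $\beta_k(d)$ is non-increasing in $k$. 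For a generic ``pseudorandom'' set $E$ one has $\max_{\xi\ne 0}|\widehat E(\xi)|\approx |E|^{1/2}$, comfortably within reach; and when $E$ is confined to a well-behaved algebraic variety the sums $\sum_{x\in E}\chi(-x\cdot\xi)$ are controlled by the geometry of the variety, via square-root cancellation of Weil and Deligne type, which is precisely the mechanism that should make the \emph{restricted} form of the conjecture provable and is the route taken in this paper.

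The main obstacle is that no such Fourier decay is available for unrestricted $E$. If $E$ lies in, or mostly in, an affine subspace then $\widehat E$ concentrates, and the extreme case --- available exactly because $d$ is even and $-1$ is a square --- is a maximal \emph{isotropic} subspace $W$ of dimension $d/2$: then $\widehat E$ is supported on $W^{\perp}=W$, one has $|\widehat E(\xi)|=|E|=q^{d/2}$ on $W$, and $\|\xi\|\equiv 0$ there, so the left-hand side of the display has size $|E|^{2k}$ and swamps the target $q^{d-1}|E|^{2k}$ --- consistent, of course, with $\Delta_k(E)=\{0\}$ and with the lower bound $\beta_k(d)\ge d/2$ coming from this very set. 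Because a set of size exactly $q^{d/2}$ can be of this kind, the conjecture can only hold with the constant $C$ strictly larger than $1$, chosen to exclude it; and what one really has to prove is a \emph{structure-versus-randomness dichotomy} for $\sum_t\nu_k(t)^2$: either $\widehat E$ has enough decay for the argument above to close, or a positive proportion of $E$ lies inside an affine subspace of dimension $<d$, in which case one induces on the dimension (using, for instance, the odd-dimensional bounds), the genuinely isotropic case being ruled out by the hypothesis $|E|\ge Cq^{d/2}$. Making this dichotomy quantitative and showing that both alternatives force $|\Delta_k(E)|\gtrsim q$ is the crux; it is open even for $k=2$, where it is nothing but the unresolved even-dimensional Erd\H os-Falconer distance problem, and this is why one is driven either to restrict $E$ to a suitable variety or to take $k$ large.
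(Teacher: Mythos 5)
The statement you were asked to prove is presented in the paper as a \emph{conjecture}, and the paper contains no proof of it. What the paper does establish is the lower bound $\beta_k(d)\ge d/2$ for even $d$ (via the isotropic set $E=\{(t_1,it_1,\ldots,t_{d/2},it_{d/2})\}$ when $-1$ is a square, for which $\Delta_k(E)=\{0\}$), partial upper bounds of the form $\frac{d+1}{2}-\frac{1}{6d+2}$ for $k=3,4$ quoted from \cite{CKP14}, and the restricted results of Theorems \ref{T1} and \ref{T2} for sets lying on regular varieties. So there is no proof to compare yours against, and indeed you do not offer one: your text is a strategy sketch that ends by conceding that the key step is open. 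That concession is correct, and to your credit the route you sketch is exactly the paper's machinery. Your reduction of $|\Delta_k(E)|\gtrsim q$ to an $L^{2k}$-type bound on $\widehat{E}$ is the content of Lemma \ref{keylem} together with \eqref{trivialk}, and your observation that the argument closes once one has nontrivial pointwise Fourier decay is precisely why the paper restricts $E$ to a regular variety, where the decay $|\widehat{V}(m)|\lesssim q^{-(d+1)/2}$ is fed into the energy estimates of Lemmas \ref{mlem} and \ref{mlemm}. Your identification of the obstruction (concentration of $\widehat{E}$ on an isotropic subspace, on which $\|\cdot\|$ vanishes) is also the right one.

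Two small inaccuracies in the sketch itself. First, with the paper's normalization one has $\widehat{r}(\xi)=q^{d(k-1)}\widehat{E}(\xi)\bigl(\overline{\widehat{E}(\xi)}\bigr)^{k-1}$ rather than $\widehat{E}(\xi)\bigl(\overline{\widehat{E}(\xi)}\bigr)^{k-1}$; you silently switch to the unnormalized transform later (e.g.\ $|\widehat{E}(\xi)|=|E|$ on $W$), so the powers of $q$ in your displayed target are not consistent throughout. Second, for $E=W$ an isotropic $d/2$-plane the double sum over $\{\|\xi\|=\|\zeta\|,\ \xi,\zeta\ne 0\}$ has size about $|W|^2|E|^{2k}=q^{d}|E|^{2k}$, not $|E|^{2k}$; presumably you meant that $\sum_t\nu_k(t)^2=|E|^{2k}$, which is the correct statement one line earlier. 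Neither slip affects your main point. The genuine gap is the one you name yourself: the structure-versus-randomness dichotomy that would handle unrestricted $E$ of size $Cq^{d/2}$ is proved neither here nor elsewhere, even for $k=2$, so the conjecture remains open and your proposal cannot be accepted as a proof of it.
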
 

In addition, they obtained  the following result.
\begin{proposition}Let $E\subset \mathbb F_q^d.$ Suppose that $C$ is a sufficiently large constant. Then the following results hold:
\begin{enumerate}
\item If  $d=4$ or $6,$ and
 $|E|\geq C q^{\frac{d+1}{2}-\frac{1}{6d+2}},$   then $|\Delta_3(E)|\geq c q$ for some $0 < c \leq 1.$
\item If $d\ge 8$ is even and $|E|\geq C q^{\frac{d+1}{2}-\frac{1}{6d+2}},$ then $|\Delta_4(E)|\geq cq$ for some $0 < c \leq 1.$
\item Suppose that $d\geq 8$ is even.
Then given $\varepsilon>0,$ there exists $C_{\varepsilon}>0$ such that
if $|E|\ge C_{\varepsilon}  q^{\frac{d+1}{2} - \frac{1}{9d -18} + \varepsilon},$ then $|\Delta_3(E)|\geq cq$ for some $0 < c \leq 1.$
\end{enumerate}
In other words, we have $\beta_3(d) \leq \frac{d+1}{2} - \frac{1}{6d + 2}$ for $d = 4, 6$, and $\beta_4(d) \leq \frac{d+1}{2} - \frac{1}{6d + 2}$ for even values $d \geq 8$.
In addition, we have $\beta_3(d) \leq \frac{d+1}{2} - \frac{1}{9d -18} $ for even values $d \geq 8.$

\end{proposition}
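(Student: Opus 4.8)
The plan is to reduce everything to an energy (additive-combinatorial $L^2$) estimate via a standard averaging argument. First I would recall the counting function $\nu_k(t) = |\{(x^1,\dots,x^k)\in E^k : \|x^1 - x^2 - \cdots - x^k\| = t\}|$, so that $\sum_{t\in\mathbb F_q}\nu_k(t) = |E|^k$. By Cauchy--Schwarz, $|E|^{2k} \le |\Delta_k(E)| \cdot \sum_{t}\nu_k(t)^2$, so it suffices to show $\sum_t \nu_k(t)^2 \lesssim |E|^{2k}/q$ whenever $|E|$ exceeds the claimed threshold; the quantity $\sum_t \nu_k(t)^2$ is the $k$-resultant energy, counting $2k$-tuples in $E$ whose two resultants have equal modulus. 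The main work is bounding this energy.

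For the energy bound I would pass to Fourier analysis on $\mathbb F_q^d$, expanding $\nu_k(t)$ using the additive characters and the Gauss-sum identity for the indicator of the sphere $\{\|x\| = t\}$. One writes $\nu_k(t)$ in terms of $\widehat{E}$ convolved against itself $k$ times and against the sphere; squaring and summing over $t$ collapses the two sphere factors via $\sum_t$ and orthogonality, leaving a main term (the $\xi = 0$ frequency) of size $|E|^{2k}/q$ plus an error term governed by $\sum_{\xi \ne 0} |\widehat{E}(\xi)|^{2}$-type sums weighted by the $(2k-2)$-fold additive convolution of $E$ with itself. Here is where the three separate cases ($d=4,6$ with $k=3$; $d\ge 8$ even with $k=4$; $d\ge 8$ even with $k=3$ and the $\varepsilon$-loss) come from: the error term involves a higher additive energy $E_m(E) = \|1_E * \cdots * 1_E\|_2^2$ for an appropriate number $m$ of copies (roughly $m = k-1$), and one needs a good upper bound on this additive energy. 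For $k=4$ one gets an even number of copies and can bound $E_3(E)$ cleanly by $L^2\to L^2$ type estimates (Plancherel plus the trivial $\|\widehat{1_E}\|_\infty \le |E|$), whereas for $k=3$ one is forced to control $E_2(E) = \|1_E * 1_E\|_2^2$, the ordinary additive energy, and there is no unconditional bound beyond the trivial one; the dyadic pigeonholing / Dvir-type or Vinh-type incidence bound used to get a non-trivial estimate is exactly what introduces the $\varepsilon$ and the weaker exponent $\frac{1}{9d-18}$ in item (3), while the restriction to $d\le 6$ in item (1) reflects where the cleaner estimate still beats the trivial bound.

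Concretely, the key steps in order: (i) set up $\nu_k(t)$ and reduce via Cauchy--Schwarz to the energy $\Lambda_k := \sum_t \nu_k(t)^2$; (ii) apply the Fourier expansion of the sphere indicator and Plancherel to write $\Lambda_k = |E|^{2k}/q + \mathcal E$, isolating the error $\mathcal E$; (iii) bound $\mathcal E$ by a product of the relevant additive-energy quantity $E_{k-1}(E)$ and a sum over nonzero frequencies that is handled by Gauss-sum estimates (each nonzero sphere Fourier coefficient has modulus $q^{-(d+1)/2}$, roughly); (iv) insert the appropriate estimate for $E_{k-1}(E)$ — the trivial/Plancherel bound when $k=4$, and the incidence-based bound when $k=3$ (the source of the $\varepsilon$ for $d\ge 8$, and of the dimension cap for the cleaner $d=4,6$ version) — and compare against $|E|^{2k}/q$; (v) solve the resulting inequality $\mathcal E \lesssim |E|^{2k}/q$ for the threshold on $|E|$, obtaining the exponents $\frac{d+1}{2}-\frac{1}{6d+2}$ and $\frac{d+1}{2}-\frac{1}{9d-18}$ respectively, and conclude $|\Delta_k(E)| \gtrsim q$. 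The main obstacle is step (iv): the ordinary additive energy $E_2(E)$ of an arbitrary set has no useful upper bound, so for $\Delta_3$ one must extract structure (via a dyadic decomposition of level sets of $1_E*1_E$ together with an incidence estimate counting solutions to $x+y=z+w$), and balancing the dyadic parameters against the Fourier error is what costs the $\varepsilon$ and forces the dimension restrictions; getting the $\Delta_4$ case is comparatively routine once the Fourier setup is in place.
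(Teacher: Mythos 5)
First, a point of reference: the paper you are reading does not prove this proposition at all --- it is quoted from the authors' earlier work \cite{CKP14}, so your attempt can only be compared with the strategy used there. Your steps (i)--(iii) do match that strategy: bound $|E|^{2k}\le |\Delta_k(E)|\sum_t\nu_k(t)^2$ by Cauchy--Schwarz, expand $\nu_k(t)$ using the Fourier transform of the sphere indicator, and isolate the main term $|E|^{2k}/q$. The genuine gap is in your step (iv), which is where the entire quantitative content of the proposition lives, and the mechanism you propose there provably cannot produce the stated exponents. Take item (2), which you call ``comparatively routine'': estimating the error pointwise via $|\widehat{S_t}(m)|\lesssim q^{-(d+1)/2}$ together with Plancherel and $|\widehat{E}(m)|\le |E|q^{-d}$ gives $\sum_m|\widehat{E}(m)|^4\le q^{-3d}|E|^3$, hence $|R_t|\lesssim q^{(d-1)/2}|E|^3$ and $\sum_tR_t^2\lesssim q^{d}|E|^6$; comparing with $|E|^{8}/q$ returns exactly the threshold $|E|\gtrsim q^{(d+1)/2}$. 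This is the trivial exponent that the present paper derives in \eqref{trivialk} and explicitly observes cannot be beaten by such estimates. No gain of $\tfrac{1}{6d+2}$ can emerge from Plancherel plus an $L^\infty$ bound on $\widehat{E}$.

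The actual source of the gain is also not the one you name. One must not bound $R_t$ pointwise in $t$; instead one sums $R_t^2$ over $t$ first, using the explicit Gauss-sum formula behind \eqref{sizeS} and \eqref{decay}, so that orthogonality in $t$ forces the constraint $\|m\|=\|m'\|$ and reduces the error to spherical averages $\sum_{\|m\|=j}|\widehat{E}(m)|^2$. The nontrivial upper bounds for these spherical averages come from finite-field restriction/extension estimates for spheres --- not from the additive energy of $E$ controlled by incidence or Vinh-type theorems, as you suggest for $k=3$ --- and they are what generate the exponents $\tfrac{1}{6d+2}$ and $\tfrac{1}{9d-18}$, the split between the cases $d=4,6$ and $d\ge 8$, and the $\varepsilon$-loss in item (3). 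They are also where the hypothesis that $d$ is even enters; your sketch never uses evenness, even though the proposition is stated only for even $d$ and the discussion surrounding Theorem \ref{sharpodd} makes clear that $(d+1)/2$ is the natural barrier absent some special structure. In short, your skeleton is the right one, but the one step that carries all of the improvement is missing, and the substitute you offer for it demonstrably stops at the trivial exponent.
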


%\begin{proposition} Let $E\subset \mathbb F_q^d.$ Suppose that $C$ is a sufficiently large constant.\\
%\noindent $(1)$ If  $d=4$ or $6,$ and 
% $|E|\geq C q^{\frac{d+1}{2}-\frac{1}{6d+2}},$   then $|\Delta_3(E)|\geq c q$ for some $0 < c \leq 1.$\\
%\noindent $(2)$ If $d\ge 4$ is even, $k\ge 4$ is an integer, and $|E|\geq C q^{\frac{d+1}{2}-\frac{1}{6d+2}},$ then $|\Delta_k(E)|\geq cq$ for some $0 < c \leq 1.$

%In other words, we have $\beta_3(d) \leq \frac{d+1}{2} - \frac{1}{6d + 2}$ for $d = 4, 6$, and $\beta_4(d) \leq \frac{d+1}{2} - \frac{1}{6d + 2}$ for even values $d \geq 4$.
%\end{proposition}

%\begin{proposition} Suppose that $d\geq 8$ is even and $E\subset \mathbb F_q^d.$ Then given $\varepsilon>0,$ there exists $C_{\varepsilon}>0$ such that 
%if $|E|\ge C_{\varepsilon}  q^{\frac{d+1}{2}-\frac{1}{9d-18}+\varepsilon}  ,$ then $|\Delta_3(E)|\geq cq$ for some $0 < c \leq 1.$
%\end{proposition}

The results of the above proposition are very interesting in that the exponent $(d+1)/2$ which is sharp in odd dimensions for the $k$-resultant modulus set problem can be improved in even dimensions.

\subsection{Purpose of this paper} 
In this paper,  we investigate the minimal cardinality of sets $E$ lying on an algebraic variety $V$ of $\mathbb F_q^d$ such that  $\Delta_k(E)\supset \mathbb F_q$ or $\mathbb F_q^*.$ In the specific case when $k=2$ and the set $E$ is contained in a unit sphere $S_1=\{x\in \mathbb F_q^d: \|x\|=1\}$,  the authors in \cite{HIKR10} proved the following results.

\begin{proposition} \label{sphericalerdos} Let $E \subset {\mathbb
F}_q^d$, $d \ge 3$, be a subset of the sphere $S_1=\{x \in {\mathbb
F}_q^d:\, \|x\|=1\}$.

\begin{enumerate} \item If $|E| \ge Cq^{\frac{d}{2}}$ with a
sufficiently large constant $C$, then there exists $c>0$ such that
 $ |\Delta_2(E)| \ge cq.$

\item If $d$ is even, then under the same assumptions as above,
$ \Delta_2(E)={\mathbb F}_q.$

\item If $d$ is even, there exists $c>0$ and $E \subset S_1$ such that
$|E| \ge cq^{\frac{d}{2}} $
and
 $\Delta_2(E) \not={\mathbb F}_q. $

\item If $d$ is odd and $|E| \ge Cq^{\frac{d+1}{2}}$ with a sufficiently large constant $C>0$, then $ \Delta_2(E)={\mathbb F}_q.$ 
%This is redundant....

\item If $d$ is odd, there exists $c>0$ and $E \subset S_1$ such that
$ |E| \ge cq^{\frac{d+1}{2}} $
and
 $\Delta_2(E) \not={\mathbb F}_q. $
 
\end{enumerate}
\end{proposition}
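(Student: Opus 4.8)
The plan is to treat the three positive statements (1), (2), (4) by Fourier analysis on $\mathbb F_q^d$, and the two extremal statements (3), (5) by explicit constructions. For the positive part, set $\nu(t)=|\{(x,y)\in E\times E:\|x-y\|=t\}|$, so that $\Delta_2(E)=\{t\in\mathbb F_q:\nu(t)>0\}$, and write $S_t=\{z\in\mathbb F_q^d:\|z\|=t\}$. From $\nu(t)=\langle \mathbf 1_E,\mathbf 1_E*\mathbf 1_{S_t}\rangle$ and Plancherel one obtains
\[
\nu(t)=\frac{|E|^2|S_t|}{q^{d}}+\frac1{q^{d}}\sum_{m\neq 0}|\widehat{\mathbf 1_E}(m)|^2\,\widehat{\mathbf 1_{S_t}}(m),
\]
whose main term is $\approx q^{-1}|E|^2$ since $|S_t|=q^{d-1}(1+o(1))$ for $t\neq 0$. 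Completing the square turns $\widehat{\mathbf 1_{S_t}}(m)$, for $m\neq 0$, into a Gauss sum times a Kloosterman sum (when $d$ is even) or a Sali\'e sum (when $d$ is odd), so $|\widehat{\mathbf 1_{S_t}}(m)|\lesssim q^{(d-1)/2}$ for every $m\neq0$ and every $t\neq 0$, improving to $q^{d/2-1}$ on the isotropic cone $\{\|m\|=0\}$ when $t\neq 0$. The one value not predicted by this heuristic, $t=0$, is always attained, because $\nu(0)=|\{(x,y)\in E^2:x\cdot y=1\}|\ge|E|$.

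With this in place, (1) follows by bounding the energy $\sum_t\nu(t)^2=|\{(x,y,x',y')\in E^4:\|x-y\|=\|x'-y'\|\}|$ instead of the individual $\nu(t)$: character-expanding and using $\|x\|=\|y\|=1$ rewrites $\sum_{x,y\in E}\chi(s\|x-y\|)$ as $\chi(2s)\sum_{x\in E}\widehat{\mathbf 1_E}(2sx)$, and two applications of Cauchy--Schwarz together with the observation that each line through the origin meets $E\subset S_1$ in at most two points give $\sum_t\nu(t)^2\lesssim q^{-1}|E|^4+q^{d-1}|E|^2$; hence $|\Delta_2(E)|\ge|E|^4/\sum_t\nu(t)^2\gtrsim q$ once $|E|\ge Cq^{d/2}$, in every dimension $d\ge3$. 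For (4), inserting $|\widehat{\mathbf 1_{S_t}}(m)|\lesssim q^{(d-1)/2}$ and Plancherel into the error term gives $|\nu(t)-q^{-1}|E|^2|\lesssim q^{(d-1)/2}|E|$, which is $<q^{-1}|E|^2$ as soon as $|E|\gg q^{(d+1)/2}$, so $\nu(t)>0$ for all $t$. The main analytic obstacle is (2): in even dimensions the threshold has to be lowered to $q^{d/2}$, and the crude $L^\infty$ bound above is too lossy. There I would split the error term according to whether $m$ lies on the isotropic cone; using $E\subset S_1$ one sees that the cone carries only a $q^{-1}$-proportion of the Fourier $\ell^2$-mass — a short Fourier-inversion computation using the explicit value of $\widehat{\mathbf 1_{S_0}}$ — while off the cone, where $\widehat{\mathbf 1_{S_t}}(m)$ is a Gauss sum times a genuine Kloosterman sum in $\|m\|$, one must extract the cancellation that comes from averaging these Kloosterman sums over the level sets $\{\|m\|=c\}$. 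Making this last estimate precise is the crux of the whole proposition.

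Finally, (3) and (5) call for explicit extremal sets. For (3), with $d$ even, decompose $\mathbb F_q^d=W\oplus W'$ orthogonally with $\dim W=2$ and $W'$ hyperbolic of dimension $d-2$ — possible because the Witt index of $\|\cdot\|$ on $\mathbb F_q^d$ is at least $(d-2)/2$ — pick a maximal totally isotropic subspace $V\subset W'$ ($\dim V=(d-2)/2$), and set $E=\{v+u:v\in W,\ \|v\|=1,\ u\in V\}$. Then $\|v+u\|=\|v\|=1$, so $E\subset S_1$; $|E|=(q\pm1)q^{(d-2)/2}\ge\tfrac12 q^{d/2}$; and since $V-V$ is isotropic, $\Delta_2(E)=\Delta_2(\{v\in W:\|v\|=1\})$, which (according to the type of $W$) is either the set of $t$ with $t(t-4)$ a square or essentially its complement — a set of size about $q/2$, hence a proper subset of $\mathbb F_q$. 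For (5), with $d$ odd, this product strategy is stuck: splitting off a $3$-dimensional factor gives the right cardinality $q^{(d+1)/2}$ but a $3$-dimensional sphere has full distance set, whereas the $1$- or $2$-dimensional factors that do yield an incomplete distance set cost a factor $q$ and reach only $q^{(d-1)/2}$. One therefore needs a genuinely different explicit algebraic subset of $S_1\subset\mathbb F_q^d$ of size $\gtrsim q^{(d+1)/2}$ on which some prescribed value of $\|x-y\|$ never occurs; exhibiting such a set and verifying its size is the main difficulty of this last case.
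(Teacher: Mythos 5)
First, a point of reference: the paper does not prove Proposition \ref{sphericalerdos} at all --- it is quoted from \cite{HIKR10} --- so your attempt can only be measured against that source. Your parts (1), (3) and (4) are essentially correct and, for (1), essentially the argument of \cite{HIKR10}: writing $\sum_{x,y\in E}\chi(s\|x-y\|)=\chi(2s)\sum_{x\in E}\overline{F}(2sx)$ with $F(m)=\sum_{y\in E}\chi(m\cdot y)$ (this is exactly the dot-product reduction $\|x-y\|=2-2x\cdot y$ on the sphere), applying Cauchy--Schwarz, and using that a line through the origin meets $S_1$ in at most two points together with Plancherel does give $\sum_t\nu(t)^2\le q^{-1}|E|^4+2q^{d-1}|E|^2$, hence $|\Delta_2(E)|\ge |E|^4/\sum_t\nu(t)^2\gtrsim q$ for $|E|\ge Cq^{d/2}$. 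Part (4) is the Iosevich--Rudnev $L^\infty$ argument and is fine, and the construction in (3) works: whether the form on $\mathbb F_q^d$ is split or not, one can choose a $2$-dimensional nondegenerate $W$ (a hyperbolic plane or the anisotropic kernel) whose orthogonal complement is split, and then $E=(S_1\cap W)+V$ with $V$ maximal totally isotropic in $W^{\perp}$ has size $\approx q^{d/2}$ and distance set equal to that of a conic in the plane, which misses about half of $\mathbb F_q$.

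The proposal is nonetheless incomplete at exactly the two places where the even/odd dichotomy of the proposition lives, and you acknowledge both. For (2), the pointwise bound $|\widehat{S_t}(m)|\lesssim q^{-(d+1)/2}$ can never yield a threshold below $(d+1)/2$, and the mechanism you sketch for the gain in even dimensions --- that the isotropic cone carries only a $q^{-1}$ proportion of $\sum_m|\widehat{E}(m)|^2$ when $E\subset S_1$, plus cancellation from averaging Kloosterman sums over the level sets $\{\|m\|=c\}$ --- is precisely the content of the theorem and is asserted rather than proved; in \cite{HIKR10} this step requires the explicit Gauss-sum formula for $\widehat{S_t}$ together with a second-moment estimate for $|\widehat{E}|^2$ restricted to spheres, neither of which you carry out. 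For (5) you supply no construction at all: you correctly rule out the product strategy (a $3$-dimensional spherical factor already has full distance set), but the item asserts the existence of $E\subset S_1$ with $|E|\gtrsim q^{(d+1)/2}$ and $\Delta_2(E)\ne\mathbb F_q$, and producing such a set --- in \cite{HIKR10} this is a genuinely arithmetic construction inside the sphere, not a direct sum of lower-dimensional pieces --- is the entire point of that item. As it stands, items (2) and (5) are open in your write-up, so the proposition is not established.
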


%Also, note that in $d = 2$ it is possible that $\Delta(S_1) \neq \mathbb{F}_q$. 
The main goal of this paper is to address the $k$-resultant modulus set problem in the case when  a set $E$ lies on any algebraic variety with the same Fourier decay as the sphere $S_1\subset \mathbb F_q^d.$    As a consequence, we shall see that if $k$ becomes larger, the exponent $d/2$ can be significantly improved in all dimensions $d \geq 2$.

\section{Discrete Fourier analysis and statement of main results} \label{sec2}

Before we state our main results, we review some background in Fourier analysis over finite fields.
Given a function $f:\mathbb F_q^d \to \mathbb C$,  the Fourier transform of $f$, denoted by $\widehat{f},$ is defined as
$$ \widehat{f}(m) =q^{-d} \sum_{x\in \mathbb F_q^d} \chi(-m\cdot x)~f(x) \quad \mbox{for}~~m\in \mathbb F_q^d,$$
where $\chi$ denotes a nontrivial additive character of $\mathbb F_q.$ The orthogonality relation of $\chi$ yields
$$ \sum_{x\in \mathbb F_q^d} \chi(m\cdot x) =\left\{ \begin{array}{ll}  0 \quad&\mbox{if}~~m\ne(0,\ldots,0)\\
                                                                                                   q^d\quad&\mbox{if}~~m=(0,\ldots,0).\end{array} \right.$$
 Also recall that the Fourier inversion theorem yields
$$ f(x)= \sum_{m\in \mathbb F_q^d} \chi(m\cdot x)~\widehat{f}(m)$$
and the Plancherel theorem yields
$$ \sum_{m\in \mathbb F_q^d} |\widehat{f}(m)|^2 =q^{-d} \sum_{x\in \mathbb F_q^d} |f(x)|^2.$$
Throughout the paper, we shall identify a set $E\subset \mathbb F_q^d$ with the characteristic function $\chi_E$ on the set $E.$ For instance, if $E\subset \mathbb F_q^d$, then we shall denote by $\widehat{E}$ the Fourier transform of the characteristic function on $E.$

\begin{definition} Let $Q \in \mathbb F_q[x_1, \ldots, x_d]$ be a polynomial.
The variety $V:=\{x\in \mathbb F_q^d: Q(x)=0\}$ is called a regular variety  if 
$|V|\approx q^{d-1}$ and $|\widehat{V}(m)| \lesssim q^{-(d+1)/2}$ for all $m\ne (0,\ldots,0).$  In particular the regular variety $V$ is called a nondegenerate regular curve if $V\subset \mathbb F_q^2$ and $Q(x)$ does not contain any linear factor.
\end{definition}

For $j\in \mathbb F_q$,  a sphere $S_j $ is defined as 
$$ S_j=\{x\in \mathbb F_q^d: \|x\|=j \}.$$
We define a paraboloid $P$ as 
$$ P=\{x\in \mathbb F_q^d: x_1^2+ \cdots+x_{d-1}^2 =x_d\}.$$
Typical examples of regular varieties are the paraboloid and  the sphere with nonzero radius (see \cite{MT04} and \cite{IR07}, respectively).

\subsection{Statement of main results}
Our first result below is bound to the cardinality of the $k$-resultant modulus sets generated by subsets of a general regular variety. 
\begin{theorem} \label{T1} Suppose that  $V\subset \mathbb F_q^d$ is a regular variety, and assume that $k\ge 3$ is an integer and $E\subset V.$
Then if $|E|\ge C q^{\frac{d-1}{2} +\frac{1}{k-1}}$ for a sufficiently large constant $C>0$,  we have 
 $$\Delta_k(E) \supset \mathbb F_q^* \quad \mbox{for even}~~d\ge 2,$$
and 
$$\Delta_k(E) =\mathbb F_q \quad \mbox{for odd}~~d\ge 3.$$
\end{theorem}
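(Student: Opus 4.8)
\emph{Proof idea.} The plan is to count, for each $t\in\mathbb F_q$, the number of representations of $t$ as a $k$-resultant modulus and to show it is strictly positive for the relevant $t$. For $t\in\mathbb F_q$ set
\[ \nu_k(t)=\#\{(x^1,\dots,x^k)\in E^k:\ \|x^1-x^2-\cdots-x^k\|=t\}. \]
Then $\Delta_k(E)\supset\mathbb F_q^*$ is equivalent to $\nu_k(t)>0$ for all $t\neq0$, and $\Delta_k(E)=\mathbb F_q$ in addition requires $\nu_k(0)>0$; so everything reduces to a pointwise lower bound on $\nu_k$. Writing the indicator of the sphere $S_t=\{y:\|y\|=t\}$ via Fourier inversion and using the definition of $\widehat E$ one gets the exact identity
\[ \nu_k(t)=q^{kd}\sum_{m\in\mathbb F_q^d}\widehat{S_t}(m)\,\overline{\widehat E(m)}\;\widehat E(m)^{k-1}. \]

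The term $m=0$ contributes the main term $q^{-d}|S_t|\,|E|^k$. For every $t\neq0$ the sphere $S_t$ is a regular variety ($|S_t|\approx q^{d-1}$ and $|\widehat{S_t}(m)|\lesssim q^{-(d+1)/2}$ for $m\neq0$); the same holds for $S_0$ when $d$ is odd, where the relevant character sum is a Gauss sum of size $\sqrt q$ if $\|m\|\neq0$ and vanishes if $\|m\|=0$; but $S_0$ is \emph{degenerate} when $d$ is even, since then $|\widehat{S_0}(m)|$ is as large as $\sim q^{-d/2}$ on $\{\|m\|=0\}$. This is exactly why the conclusion is $\Delta_k(E)\supset\mathbb F_q^*$ for even $d$ but $\Delta_k(E)=\mathbb F_q$ for odd $d$. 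For the admissible values of $t$ the main term equals $(1+o(1))q^{-1}|E|^k$, and writing $R_t$ for the sum over $m\neq0$, it remains to prove $|R_t|\ll q^{-1}|E|^k$ under $|E|\ge Cq^{\frac{d-1}{2}+\frac{1}{k-1}}$.

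To control $R_t$ I would first record the effect of $E\subset V$: since $\chi_E=\chi_E\chi_V$ we have $\widehat E=\widehat E*\widehat V$, and isolating $\widehat V(0)=q^{-d}|V|\approx q^{-1}$ and using $|\widehat V(\ell)|\lesssim q^{-(d+1)/2}$ for $\ell\neq0$, together with Cauchy--Schwarz and Plancherel, yields $|\widehat E(m)|\lesssim q^{-(d+1)/2}|E|^{1/2}$ for all $m\neq0$. Next, bounding $|R_t|\le q^{kd}\sum_{m\neq0}|\widehat{S_t}(m)|\,|\widehat E(m)|^k$ and applying Cauchy--Schwarz in $m$ through the factorization $|\widehat E(m)|^k=|\widehat E(m)|\cdot|\widehat E(m)|^{k-1}$ --- pairing $|\widehat{S_t}(m)|\,|\widehat E(m)|$ with $|\widehat E(m)|^{k-1}$, using $\max_{m\neq0}|\widehat{S_t}(m)|\lesssim q^{-(d+1)/2}$ and $\sum_m|\widehat E(m)|^2=q^{-d}|E|$, and recognizing $\sum_m|\widehat E(m)|^{2(k-1)}$ by Plancherel as $q^{-(2k-3)d}$ times the $(k-1)$-fold additive energy
\[ \mathcal E_{k-1}(E)=\#\{(x^1,\dots,x^{k-1},y^1,\dots,y^{k-1})\in E^{2(k-1)}:\ x^1+\cdots+x^{k-1}=y^1+\cdots+y^{k-1}\}, \]
one arrives at $|R_t|\lesssim q^{\frac{d-1}{2}}\,|E|^{1/2}\,\bigl(\mathcal E_{k-1}(E)\bigr)^{1/2}$. (For $k=2$ one has $\mathcal E_1(E)=|E|$ and this is precisely the Iosevich--Rudnev bound $\alpha_2(d)\le\frac{d+1}{2}$, valid with no assumption on $E$; so the entire improvement for $k\ge3$ must come from $E$ being constrained to $V$.) Hence it suffices to establish the energy estimate $\mathcal E_{k-1}(E)\ll q^{-(d+1)}|E|^{2k-1}$ whenever $|E|\ge Cq^{\frac{d-1}{2}+\frac{1}{k-1}}$.

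This energy estimate is, I expect, the main obstacle and the technical heart of the argument: one must beat the trivial bound $\mathcal E_{k-1}(E)\le|E|^{2k-3}$ by exploiting that a subset of a regular variety carries no large additive structure. I would try to prove it by induction on $k$, using the convolution recursion $\mu_{k-1}=\mu_{k-2}*\chi_E$ for the sum-multiplicity functions $\mu_j=\chi_E^{*j}$, which expresses $\mathcal E_{k-1}(E)$ in terms of $\mathcal E_{k-2}(E)$ after one application of a restriction/extension-type inequality for functions supported on $V$ --- available exactly because $|\widehat V(\ell)|\lesssim q^{-(d+1)/2}$, and where the pointwise bound $|\widehat E(m)|\lesssim q^{-(d+1)/2}|E|^{1/2}$ also enters --- with the exponent $\tfrac{1}{k-1}$ emerging upon optimizing the resulting recursion (the base case being $\mathcal E_1(E)=|E|$, or a direct estimate for $\mathcal E_2(E)$). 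Granting this, $|R_t|\ll q^{-1}|E|^k$, so $\nu_k(t)\ge(1+o(1))q^{-1}|E|^k-|R_t|>0$ for every admissible $t$, and the theorem follows.
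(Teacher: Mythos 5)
Your skeleton — the counting function $\nu_k(t)$, the Fourier decomposition into the main term $q^{-d}|S_t||E|^k$ plus a remainder $R_t$, and the observation that only $S_0$ in even dimensions is degenerate — is exactly the paper's. The genuine gap is in your estimate of $R_t$: the Cauchy--Schwarz step pairing $|\widehat{S_t}(m)|\,|\widehat E(m)|$ with $|\widehat E(m)|^{k-1}$ reduces the theorem to the energy bound $\mathcal E_{k-1}(E)\ll q^{-(d+1)}|E|^{2k-1}$, and this target is unattainable in part of the claimed range, so no proof of it can exist. Indeed, writing $r(s)$ for the number of $(k-1)$-tuples of $E$ summing to $s$, Cauchy--Schwarz gives $\mathcal E_{k-1}(E)=\sum_s r(s)^2\ge |E|^{2(k-1)}/q^d$ unconditionally, so your required bound forces $|E|\gg q$; but for $d=2$ and $k\ge 4$ the hypothesis only supplies $|E|\ge Cq^{\frac12+\frac1{k-1}}$ with $\frac12+\frac1{k-1}\le\frac56<1$, so the lemma you need is false there. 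More generally, the best bound your proposed recursion could deliver is of the form $\mathcal E_{k-1}(E)\lesssim q^{(d-1)(k-2)}|E|+q^{-1}|E|^{2k-3}$, and the second (near-diagonal) term is $\ll q^{-(d+1)}|E|^{2k-1}$ only when $|E|\gtrsim q^{d/2}$; so even where it is not outright false, your route only proves the theorem under the stronger hypothesis $|E|\gtrsim q^{d/2}$ rather than $|E|\gtrsim q^{\frac{d-1}{2}+\frac1{k-1}}$. (That the energy recursion itself is only sketched is a secondary issue.)

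The loss comes from interpolating $\sum_m|\widehat E(m)|^k$ all the way between $L^2$ and $L^{2k-2}$; this costs a factor $q^{1/2}$ on the near-diagonal contribution. The paper instead pulls out $\sup_{m\ne 0}|\widehat{S_t}(m)|\lesssim q^{-(d+1)/2}$ and controls $\sum_m|\widehat E(m)|^k$ by the energy at the correct level: for even $k$ one has the exact identity $\sum_m|\widehat E(m)|^k=q^{-dk+d}\Lambda_k(E)$, where $\Lambda_k(E)$ counts solutions of $x^1+\cdots+x^{k/2}=x^{k/2+1}+\cdots+x^k$ in $E^k$, and for odd $k$ one interpolates only between the adjacent even energies, $\sum_m|\widehat E(m)|^k\le q^{-dk+d}(\Lambda_{k-1}(E)\Lambda_{k+1}(E))^{1/2}$. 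The recursion $\Lambda_k(E)\lesssim q^{d-1}\Lambda_{k-2}(E)+q^{-1}|E|^{k-1}$ — proved by replacing the constraint on the last variable by the indicator of $V$ and using the regular-variety decay, close in spirit to what you sketch — then gives $\Lambda_k(E)\lesssim q^{(d-1)(k-2)/2}|E|+q^{-1}|E|^{k-1}$ for $|E|>q^{(d-1)/2}$, and the threshold $q^{\frac{d-1}{2}+\frac1{k-1}}$ drops out by direct comparison. If you rerun your computation with this replacement, the spurious requirement $|E|\gtrsim q^{d/2}$ disappears.
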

Proposition \ref{sphericalerdos} indicates that in order to get  $\Delta_2(E)=\mathbb F_q,$ the sharp exponent for sets $E$ of $S_1$ 
must be $d/2$ for even $d\ge 4,$ and $(d+1)/2$ for odd $d\ge 3.$ On the other hand, Theorem \ref{T1} shows that  the exponent $d/2$ can be decreased to $(d-1)/2+ 1/(k-1)$ for $k\ge 3$ and any regular variety $V\subset \mathbb F_q^d, d\ge 2$ (if we are interested in getting $\Delta_k(E) \supset \mathbb F_q^*$ for even $d\ge 2$). The authors in  \cite{HIKR10} used the dot-product set estimates for deriving the size of $\Delta_2(E)$ for $E\subset S_1.$
More precisely, they  utilized the specific property that if $E$ is a subset of the unit sphere $S_1 \subset \mathbb F_q^d,$ then $\|x-y\|=2-2 x\cdot y$ for $x,y \in E$, and so
$$ |\Delta_2(E)|=|\Pi_2(E)|:=|\{x\cdot y\in \mathbb F_q: x,y\in E\}|.$$  Here, $x\cdot y = x_1y_1 + \dots + x_dy_d$ is the standard dot product.
More generally,  we see that if $k\ge 2$ and $E \subset S_1 \subset \mathbb F_q^d,$ then
\begin{equation}\label{dot} |\Delta_k(E)| = |\Pi_k(E)| :=\left|\left\{ \sum_{i=1}^k \sum_{j=1}^k  \delta_{i<j}~  \zeta_{i,j}~ x^i \cdot x^j:  x^{l} \in E ,  l=1,2,\ldots, k\right\}\right|,\end{equation}
where $\delta_{i<j} =1 $ if $i<j$ and $0$ otherwise, and $\zeta_{i,j} =1$ for $i=1$ and $-1$ otherwise.
However, if $k\ge 3,$ then it may not be simple to obtain a good lower bound on $|\Pi_k(E)|.$
Furthermore,  if the unit sphere $S_1$ is replaced by a general regular variety $V\subset \mathbb F_q^d,$ then the inequality in (\ref{dot}) can not be true in general.
For these reasons,  the dot-product set estimates may not be useful in deriving results on  the $k$-resultant modulus set problem for an algebraic variety.
Instead of the dot-product set estimates, we shall relate our problem to estimating  the $k$-energy (see Definition \ref{kenergy} below), which will yield Theorem \ref{T1}.\\

In dimension two, when a regular variety $V$ is nondegenerate, Theorem \ref{T1} can be improved.
Indeed, we have the following result.

\begin{theorem} \label{T2}Suppose that  $E$ is contained in a nondegenerate regular curve $V\subset \mathbb F_q^2.$   If $k\ge 4$ is an integer and  $|E|\ge C q^{\frac{1}{2}+\frac{1}{2k-4}}$ for a sufficiently large constant $C>0$, then $\mathbb F_q^* \subset \Delta_k(E).$
\end{theorem}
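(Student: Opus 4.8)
\emph{Strategy.} The plan is to translate $\mathbb F_q^*\subset\Delta_k(E)$ into the positivity of a counting function and then to estimate the resulting error term by harmonic analysis, the analytic fuel being additive–energy bounds available because $V$ is nondegenerate. For $r\in\mathbb F_q^*$ set
$$\nu(r)=\big|\{(x^1,\dots,x^k)\in E^k:\ \|x^1-x^2-\cdots-x^k\|=r\}\big|,$$
so that $r\in\Delta_k(E)\iff\nu(r)>0$, and it suffices to prove $\nu(r)>0$ for every $r\in\mathbb F_q^*$. Writing $\chi_{S_r}$ for the indicator of the sphere $S_r$ (recall $|S_r|=q-\eta(-1)$ for $r\neq0$, with $\eta$ the quadratic character) and applying the Fourier inversion and Plancherel identities, one obtains
$$\nu(r)=M_r+\mathcal E(r),\qquad M_r=q^{-2}|S_r|\,|E|^{k}\ \approx\ \frac{|E|^{k}}{q},\qquad \mathcal E(r)=q^{2k}\!\!\sum_{m\neq 0}\widehat{S_r}(m)\,\widehat E(m)\,\overline{\widehat E(m)}^{\,k-1}.$$
The whole problem reduces to proving $|\mathcal E(r)|<M_r$ for every $r\in\mathbb F_q^*$.

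\emph{Energy input.} Since $Q$ has no linear factor, $V$ contains no affine line, so $V$ and any nontrivial translate $V+t$ share no component and meet in $O(1)$ points by Bézout; hence $|E\cap(E+t)|=O(1)$ for all $t\neq0$. This ``bounded‑difference'' property is stable under convolution, and one deduces $\|\chi_E^{*j}\|_\infty\lesssim|E|^{\,j-2}$ for $j\ge 3$, and therefore the hierarchy of energy estimates
$$T_j(E):=\|\chi_E^{*j}\|_2^2\ \lesssim\ |E|^{2j-2}\quad(j\ge 2),\qquad\text{equivalently}\qquad \sum_{m}|\widehat E(m)|^{2j}\ \lesssim\ q^{-(4j-2)}\,|E|^{2j-2}.$$
Now group the $k$ vectors as the difference $x^1-x^2$ together with the sum $x^3+\cdots+x^k$, so that $\mathcal E(r)$ is controlled by the two factors $\chi_E*\chi_{-E}$ and $\chi_E^{*(k-2)}$. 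The $t=0$ part of $\chi_E*\chi_{-E}$ contributes only $|E|\,\chi_E^{*(k-2)}(S_r)$, which (after a mild bootstrap on the $(k-2)$‑resultant count) is of lower order; its off‑diagonal part is $O(1)$‑bounded with $\ell^2$ Fourier norm $\lesssim q^{-1}|E|$, while $\|\,\widehat{\chi_E^{*(k-2)}}\,\|_2^2=q^{-2}T_{k-2}(E)\lesssim q^{-2}|E|^{2k-6}$. Feeding these together with $\|\widehat{S_r}\|_\infty\lesssim q^{-3/2}$ into the expression for $\mathcal E(r)$ and applying Cauchy–Schwarz in $m$ yields
$$|\mathcal E(r)|\ \lesssim\ q^{1/2}\,|E|^{\,k-2},$$
which, compared with $M_r\approx|E|^k/q$, already gives $\mathbb F_q^*\subset\Delta_4(E)$ once $|E|\gtrsim q^{3/4}=q^{\frac12+\frac1{2\cdot4-4}}$ — the case $k=4$ of the theorem.

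\emph{The hard part ($k\ge 5$).} It remains to improve the exponent of $q$ in the error from $q^{1/2}$ to $q^{1/(k-2)}$; an elementary comparison with $M_r\approx|E|^k/q$ shows that $|\mathcal E(r)|\lesssim q^{1/(k-2)}|E|^{k-2}$ is exactly equivalent to the threshold $|E|\ge Cq^{\frac12+\frac1{2k-4}}$ of the statement. To achieve this one should not collapse $\chi_E^{*(k-2)}$ into a single $L^2$ norm, but instead distribute the $k-2$ convolutions among roughly $k/2$ ``blocks'', each block a difference (or sum) of two elements of $E$ whose representation function is $O(1)$‑bounded away from $O(1)$ spikes of height $\le|E|$; this reduces $\nu(r)$ essentially to a $\lceil k/2\rceil$‑fold resultant count for these bounded auxiliary measures, which one then estimates by interpolating along the whole energy ladder $T_1(E)=|E|,\ T_2(E)\lesssim|E|^2,\dots$, exploiting the sphere both through $\|\widehat{S_r}\|_\infty\lesssim q^{-3/2}$ and through $\sum_m|\widehat{S_r}(m)|^2\approx q^{-1}$, and, where it helps, through the orthogonality relation $\sum_r\widehat{S_r}(m)\widehat{S_r}(m')$, which a Gauss–sum evaluation shows depends only on whether $\|m\|=\|m'\|$. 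The hypotheses $k\ge4$ and the nondegeneracy of $V$ are used precisely to keep this last optimization tight. Once the refined bound $|\mathcal E(r)|\lesssim q^{1/(k-2)}|E|^{k-2}$ is in hand, $|\mathcal E(r)|<M_r$ for every $r\in\mathbb F_q^*$ whenever $|E|\ge Cq^{\frac12+\frac1{2k-4}}$ with $C$ sufficiently large, so $\nu(r)>0$ and $\mathbb F_q^*\subset\Delta_k(E)$. I expect the bookkeeping of this interpolation to be the genuine obstacle; the surrounding structure is the standard Fourier skeleton together with the curve‑specific bound $|E\cap(E+t)|=O(1)$ for $t\neq0$.
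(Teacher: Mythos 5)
Your Fourier skeleton (the decomposition $\nu(r)=M_r+\mathcal E(r)$ and the reduction to $|\mathcal E(r)|<M_r$) is exactly the paper's, and your $k=4$ case closes correctly once one knows $\Lambda_4(E)=T_2(E)\lesssim|E|^2$. But there are two genuine gaps. First, your source of energy bounds is the claim that nondegeneracy forces $|E\cap(E+t)|=O(1)$ for $t\neq0$ via B\'ezout. That implication is false: ``no linear factor'' does not prevent $V$ from sharing a whole component with a translate. Take $V=S_1\cup(S_1+t)$, the union of two unit circles; its defining polynomial has no linear factor, $|V|\approx q$, and the Fourier decay $q^{-3/2}$ holds, so it is a nondegenerate regular curve in the paper's sense, yet $|V\cap(V-t)|\geq|S_1|\approx q$. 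The paper does not use any such intersection bound; it obtains $\Lambda_4(E)\lesssim|E|^2$ from the $L^2\to L^4$ extension (restriction) estimate for nondegenerate curves (Theorem 1.1 of [KS10]), which is a substantive theorem you would need to either cite or reprove.

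Second, and more seriously, the case $k\geq5$ --- which is most of the theorem --- is not proved. You correctly identify that the needed bound is $|\mathcal E(r)|\lesssim q^{1/(k-2)}|E|^{k-2}$ at the threshold, but the proposed route (``distribute the $k-2$ convolutions among blocks \ldots interpolate along the whole energy ladder'') is a plan, not an argument, and you concede that the interpolation has not been carried out. The paper's mechanism here is different and much more concrete: since $E\subset V$, one replaces a single copy of $\chi_E$ by $\chi_V$ inside the energy count and uses only $|V|\approx q$ and $|\widehat V(m)|\lesssim q^{-3/2}$ to derive the recursive inequality $\Lambda_k(E)\lesssim q^{d-1}\Lambda_{k-2}(E)+q^{-1}|E|^{k-1}$ (solving a quadratic inequality in $\Lambda_k(E)^{1/2}$), which is then iterated down to $\Lambda_4(E)\lesssim|E|^2$; odd $k$ is handled by Cauchy--Schwarz between $\Lambda_{k-1}$ and $\Lambda_{k+1}$. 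Note also that the final error bound obtained this way is $q^{(k-3)/2}|E|^2+q^{-1/2}|E|^{k-1}$ rather than the single term $q^{1/(k-2)}|E|^{k-2}$ you aim for; both suffice at the stated threshold, but without the recursion (or an equivalent) your argument does not reach either.
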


\section{Preliminary lemmas}

In this section,  we derive and collect useful lemmas and well known facts which are essential in proving our main results. As in the Erd\H{o}s-Falconer distance problem,  analyzing a counting function will be a key ingredient to deduce results on the $k$-resultant modulus set problem. Let us fix an integer $k\ge 2$ and $E\subset \mathbb F_q^d.$ For each $t\in \mathbb F_q,$ define a counting function $\nu_k(t)$ as
$$\nu_k(t)=|\{ (x^1, x^2, \ldots, x^k)\in E^k: \|x^1-x^2 -\cdots -x^k\|=t\}|.$$
\begin{lemma}\label{keylem} Let $E\subset \mathbb F_q^d$ and $k\ge 2$ be an integer.
Suppose that  there exists a constant $c>0$ independent of $q$, the size of the underlying finite field, such that 
\begin{equation}\label{C1} \frac{|E|^k}{q} > c q^{dk-\frac{d+1}{2}} \sum_{m\in \mathbb F_q^d} \left|\widehat{E}(m) \right|^{k}.\end{equation}
Then we have
$$ \Delta_k(E) \supset \mathbb F_q^* \quad\mbox{for even}~~ d\ge 2,$$
and 
$$ \Delta_k(E)=\mathbb F_q \quad\mbox{for odd}~~ d\ge 3.$$
\end{lemma}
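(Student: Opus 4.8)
The plan is to analyze the counting function $\nu_k(t)$ and to show that hypothesis \eqref{C1} forces $\nu_k(t)>0$ for every $t\in\mathbb F_q^*$ — equivalently $t\in\Delta_k(E)$ — together with $\nu_k(0)>0$ when $d$ is odd; this is exactly the asserted conclusion. First I would identify the sphere $S_t=\{x\in\mathbb F_q^d:\|x\|=t\}$ with its indicator function, apply Fourier inversion to it, and expand
\[
\nu_k(t)=\sum_{x^1,\dots,x^k\in E}S_t\big(x^1-x^2-\cdots-x^k\big)
=\sum_{m\in\mathbb F_q^d}\widehat{S_t}(m)\Big(\sum_{x^1\in E}\chi(m\cdot x^1)\Big)\prod_{j=2}^{k}\Big(\sum_{x^j\in E}\chi(-m\cdot x^j)\Big).
\]
Using $\sum_{x\in E}\chi(\pm m\cdot x)=q^d\,\widehat{E}(\mp m)$, this reduces to $\nu_k(t)=q^{dk}\sum_{m\in\mathbb F_q^d}\widehat{S_t}(m)\,\widehat{E}(-m)\,\widehat{E}(m)^{k-1}$.

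Next I would isolate the $m=0$ term as the main term and estimate the rest as an error term. The $m=0$ contribution equals $q^{dk}\,\widehat{S_t}(0)\,\widehat{E}(0)^{k}=q^{-d}\,|S_t|\,|E|^k$, and the classical Gauss-sum count of points on spheres gives $|S_t|\approx q^{d-1}$ for every $t\neq 0$ (this, together with $|\widehat{S_t}(m)|\lesssim q^{-(d+1)/2}$ for $m\neq 0$, is precisely the statement that spheres of nonzero radius are regular varieties, cf. \cite{IR07}), and $|S_0|=q^{d-1}$ when $d$ is odd; so in these cases the main term is comparable to $|E|^k/q$. For the remaining sum over $m\neq 0$ I would pull out $\max_{m\neq 0}|\widehat{S_t}(m)|$, use the decay bound $|\widehat{S_t}(m)|\lesssim q^{-(d+1)/2}$ — available for all $t$ when $d$ is odd and for $t\neq 0$ when $d$ is even — and then use that $\chi_E$ is real-valued, whence $|\widehat{E}(-m)|=|\widehat{E}(m)|$ and $\sum_{m\neq 0}|\widehat{E}(-m)||\widehat{E}(m)|^{k-1}\le\sum_{m}|\widehat{E}(m)|^{k}$. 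This gives
\[
\Big|q^{dk}\sum_{m\neq 0}\widehat{S_t}(m)\,\widehat{E}(-m)\,\widehat{E}(m)^{k-1}\Big|
\;\lesssim\; q^{\,dk-\frac{d+1}{2}}\sum_{m\in\mathbb F_q^d}\big|\widehat{E}(m)\big|^{k}.
\]
Comparing the two bounds, I would choose the absolute constant $c$ in \eqref{C1} to absorb the implied constants; then \eqref{C1} says exactly that the main term strictly dominates the error term, hence $\nu_k(t)>0$.

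Running this over all $t\in\mathbb F_q^*$ yields $\Delta_k(E)\supset\mathbb F_q^*$ in every dimension $d\ge 2$, and running it additionally at $t=0$ — legitimate only when $d$ is odd, since there $|S_0|=q^{d-1}$ and the decay $|\widehat{S_0}(m)|\lesssim q^{-(d+1)/2}$ still holds — upgrades the conclusion to $\Delta_k(E)=\mathbb F_q$ for odd $d\ge 3$. The step I expect to be the real obstacle, and the reason the even-dimensional statement excludes $t=0$, is the behaviour of $\widehat{S_0}(m)$ on the isotropic cone $\{m:\|m\|=0\}$: when $d$ is even these frequencies satisfy only the weaker bound $|\widehat{S_0}(m)|\lesssim q^{-d/2}$, which is too large to be beaten by the main term, so the method genuinely cannot reach $t=0$ in even dimensions. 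Keeping track of the precise Gauss-sum constants for $|S_t|$ and $|\widehat{S_t}(m)|$ across the parity of $d$ and the split $t=0$ versus $t\neq0$ (where Kloosterman sums arise for even $d$ and Sali\'e sums for odd $d$) is the main bookkeeping point to be careful with.
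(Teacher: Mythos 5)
Your proposal is correct and follows essentially the same route as the paper: Fourier inversion on the sphere indicator, isolation of the $m=(0,\dots,0)$ main term $q^{-d}|S_t||E|^k \approx |E|^k/q$, and estimation of the remaining sum by $q^{dk-\frac{d+1}{2}}\sum_m |\widehat{E}(m)|^k$ via the decay bound $|\widehat{S_t}(m)|\lesssim q^{-(d+1)/2}$, which is exactly how the paper splits $\nu_k(t)=M_t+R_t$. Your explanation of why $t=0$ must be excluded in even dimensions also matches the restriction in the paper's cited decay estimate.
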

\begin{proof}
Notice that if $\nu_k(t)>0$, then $t\in \Delta_k(E).$ 
It therefore suffices to prove that if $d\ge 2$ is even, then
\begin{equation}\label{con1} \nu_k(t) >0 \quad \mbox{for all}~~t\in \mathbb F_q^*,\end{equation}
and if $d\ge 3$ is odd then 
\begin{equation}\label{con2} \nu_k(t) >0 \quad \mbox{for all}~~t\in \mathbb F_q.\end{equation}

  Let us estimate $\nu_k(t).$ We have
$$\nu_k(t)=\sum_{x^1,\ldots, x^k \in E} S_t(x^1-\cdots-x^k), $$ 
where we recall that $S_t=\{x\in \mathbb F_q^d : \|x\|=t\}$ and we identify the set $S_t$ with the characteristic function $\chi_{S_t}$ on the set $S_t.$
Applying the Fourier inversion theorem to $S_t(x^1-\cdots-x^k)$ and using the definition of  the normalized Fourier transform,  we see 
$$\nu_k(t)= q^{dk} \sum_{m\in \mathbb F_q^d} \widehat{S_t}(m) \overline{\widehat{E}}(m) \left( \widehat{E}(m) \right)^{k-1}$$
$$= q^{-d} |S_t| |E|^k + q^{dk} \sum_{m\ne (0,\ldots,0)} \widehat{S_t}(m) \overline{\widehat{E}}(m) \left( \widehat{E}(m) \right)^{k-1} := M_t + R_t. $$
 
The size of $S_t\subset \mathbb F_q^d$ is approximately 
$ q^{d-1}$ unless $t=0, d=2,$ and $-1\in \mathbb F_q$ is not a square.
In fact,  the following explicit value of $|S_t|$  can be obtained  (see Theorem 6.26 and Theorem 6.27 in \cite{LN97}):
If $S_t \subset \mathbb F_q^d$ is the sphere, then we have
\begin{equation}\label{sizeS} |S_t| =\left\{ \begin{array}{ll}  q^{d-1} +v(t) q^{\frac{d-2}{2}} \eta( (-1)^{\frac{d}{2}} ) \quad &\mbox{for even}~~d\ge 2\\
                                              q^{d-1} + q^{\frac{d-1}{2}} \eta(  (-1)^{\frac{d-1}{2}} t )\quad &\mbox{for odd}~~d\ge 3, \end{array}  \right.\end{equation}
where $\eta$ denotes the quadratic character of $\mathbb F_q$, and  the integer-valued function $v$ on $\mathbb F_q$ is defined by $ v(t)=-1$ for $t\in \mathbb F_q^*$ and $v(0)=q-1.$     

The following Fourier dacay on $S_t\subset \mathbb F_q^d$ was given in Proposition 2.2 of \cite{KS12}:\\
If $m\ne (0, \ldots,0),$ then we have
\begin{equation}\label{decay} |\widehat{S_t}(m)| \le 2 q^{-\frac{d+1}{2}} \quad\mbox{unless} ~d\ge 2 ~\mbox{is even and}~ t=0.\end{equation}

From \eqref{sizeS} and \eqref{decay}, we see that 
$|S_t|= q^{d-1}(1 + \underline{o}(1))$ and $|\widehat{S_t}(m)|\le 2 q^{-(d+1)/2}$ if $d\ge 2$ is even, $t\in \mathbb F_q^*$ and $m\ne (0,\ldots,0)$ (or if $d\ge 3$ is odd, $t\in \mathbb F_q$ and $m\ne (0,\ldots,0)$).
Using these facts, we observe that if  $d\ge 2$ is even and $t\in \mathbb F_q^*$ (or if $d\ge 3$ is odd and $t\in \mathbb F_q$), then
$$ 0< M_t= \frac{|E|^k}{q}(1 +\underline{o}(1)) $$
and
$$ |R_t| \le q^{dk-\frac{d+1}{2}} \sum_{m\in \mathbb F_q^d} \left|\widehat{E}(m) \right|^{k}.$$
Then \eqref{con1} and \eqref{con2} follow immediately from the assumption \eqref{C1}.
Thus, the proof is complete.
\end{proof}

Lemma \ref{keylem} says that  results on the $k$-resultant modulus set problems can be deduced by determining the size of the set $E\subset \mathbb F_q^d$ satisfying the inequality \eqref{C1}.
To do this, it will be a key factor to obtain a good upper bound of  $\sum_{m\in \mathbb F_q^d} \left|\widehat{E}(m) \right|^{k}.$ 
Using a trivial estimate on it,  the Plancherel theorem yields that
\begin{equation}\label{trivialk}\sum_{m\in \mathbb F_q^d} \left|\widehat{E}(m) \right|^{k} \le \left|\widehat{E}(0,\ldots,0) \right|^{k-2} \sum_{m\in \mathbb F_q^d} \left|\widehat{E}(m) \right|^{2} = \left(\frac{|E|}{q^d} \right)^{k-2} \frac{|E|}{q^d}
=\frac{|E|^{k-1}}{q^{dk-d}}.\end{equation}
Combining this estimate with Lemma \ref{keylem},  we see that  if $E\subset \mathbb F_q^d$ with $|E|\ge C q^{(d+1)/2}$ for some large constant $C>1$ then the conclusions in Lemma \ref{keylem} hold for any integer $k\ge 2.$ We have also seen from Theorem \ref{sharpodd} that for every integer $k\ge 2,$ the exponent $(d+1)/2$ is in general sharp in odd dimensions $d\ge 3.$ In conclusion,  the inequality \eqref{trivialk} can not be improvable for all integers $k\ge 2$ in general odd dimensional case. Furthermore,  $(5)$ of Proposition \ref{sphericalerdos} also implies that 
even if  we restrict the sets $E$ to the subsets of the unit sphere $S_1\subset \mathbb F_q^d,$ the inequality \eqref{trivialk}  can not be improved in general for $k=2.$ 
However, we shall see that if $k>2$ and the set $E$ is contained in a regular variety, then  the inequality \eqref{trivialk}  can be significantly improved in any dimension $d\ge 2.$
From this observation, we shall derive our main results stated in Section \ref{sec2}.

\begin{definition}\label{kenergy}
For an even integer $k=2m \ge 2$ and $E\subset \mathbb F_q^d,$   the $k$-energy is defined as
$$ \Lambda_k(E)= \sum_{\substack{x^1, \ldots, x^k\in E\\
                                     x^1+\cdots+ x^{m} = x^{m+1} + \cdots+ x^k}} 1.$$
\end{definition}

An upper bound of $\sum_{m\in \mathbb F_q^d} \left|\widehat{E}(m) \right|^{k}$ can be written in terms of the $k$-energy $\Lambda_k(E).$ 
\begin{lemma} \label{Lem3.3} If $k\ge 2$ is even, and $E\subset \mathbb F_q^d$, then we have
$$ \sum_{m\in \mathbb F_q^d} \left|\widehat{E}(m)\right|^k = q^{-dk+d} \Lambda_k(E).$$
\end{lemma}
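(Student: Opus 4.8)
The plan is to expand the power $\left|\widehat E(m)\right|^k$ using the definition of the Fourier transform and then apply orthogonality. First I would write, for even $k=2m$,
\[
\left|\widehat E(m)\right|^k = \left(\widehat E(m)\right)^m \left(\overline{\widehat E(m)}\right)^m,
\]
and substitute the formula $\widehat E(m) = q^{-d}\sum_{x\in E}\chi(-m\cdot x)$ together with $\overline{\widehat E(m)} = q^{-d}\sum_{x\in E}\chi(m\cdot x)$. This turns each $\left|\widehat E(m)\right|^k$ into $q^{-dk}$ times a sum over $k$-tuples $(x^1,\dots,x^m,x^{m+1},\dots,x^k)\in E^k$ of $\chi\big(-m\cdot(x^1+\cdots+x^m) + m\cdot(x^{m+1}+\cdots+x^k)\big)$.

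Next I would sum over $m\in\mathbb F_q^d$ and interchange the order of summation, pulling the sum over $m$ inside to act only on the character. The exponent of $\chi$ is $m\cdot\big((x^{m+1}+\cdots+x^k)-(x^1+\cdots+x^m)\big)$, so by the orthogonality relation for $\chi$ quoted in the excerpt, $\sum_{m\in\mathbb F_q^d}\chi(m\cdot a) = q^d$ if $a=(0,\dots,0)$ and $0$ otherwise. Hence the $m$-sum collapses to $q^d$ times the indicator that $x^1+\cdots+x^m = x^{m+1}+\cdots+x^k$. Collecting constants gives
\[
\sum_{m\in\mathbb F_q^d}\left|\widehat E(m)\right|^k = q^{-dk}\cdot q^d \sum_{\substack{x^1,\dots,x^k\in E\\ x^1+\cdots+x^m=x^{m+1}+\cdots+x^k}} 1 = q^{-dk+d}\,\Lambda_k(E),
\]
which is exactly the claimed identity.

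This argument is entirely routine — it is the standard "moments of the Fourier transform count additive configurations" computation — and I do not anticipate a genuine obstacle; the only point requiring a little care is making sure the conjugation is distributed correctly so that exactly $m$ of the $k$ variables enter with a plus sign and $m$ with a minus sign, matching the definition of $\Lambda_k(E)$ in Definition \ref{kenergy}. One should also note that this uses only the evenness of $k$ (to split the modulus symmetrically) and nothing about $E$ lying on a variety, so the lemma holds for arbitrary $E\subset\mathbb F_q^d$; the variety hypothesis will only be invoked later when bounding $\Lambda_k(E)$ itself.
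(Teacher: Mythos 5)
Your proposal is correct and follows exactly the same route as the paper's (much terser) proof: split $\left|\widehat E(m)\right|^k$ into $k/2$ copies of $\widehat E(m)$ and $k/2$ copies of its conjugate, expand via the definition of the Fourier transform, and collapse the sum over $m$ by orthogonality to recover $\Lambda_k(E)$. The only cosmetic issue is that you use the letter $m$ both for the Fourier variable and for $k/2$, which you should disambiguate, but the argument itself is complete and matches the paper.
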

\begin{proof} Since $k\ge 2$ is an even integer,   we can write 
$$|\widehat{E}(m)|^k = \left(\widehat{E}(m)\right)^{k/2} \left(\overline{\widehat{E}}(m)\right)^{k/2}.$$
Then the statement follows from the definition of the Fourier transform and the orthogonality relation of $\mathbb F_q.$
\end{proof}

\begin{lemma}\label{Lem3.4} If $k\ge 3$ is odd and  and $E\subset  \mathbb F_q^d$, then we have
$$ \sum_{m\in \mathbb F_q^d}  \left|\widehat{E}(m)\right|^k \le q^{-dk+d} \left( \Lambda_{k-1}(E)~ \Lambda_{k+1}(E)\right)^{\frac{1}{2}}.$$
\end{lemma}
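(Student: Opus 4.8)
The plan is to reduce the odd case to the (already handled) even case by a single application of the Cauchy--Schwarz inequality. The key observation is that when $k\ge 3$ is odd, both $k-1$ and $k+1$ are even integers, and moreover $(k-1)/2$ and $(k+1)/2$ are nonnegative integers. So I would begin by splitting the exponent as
$$\left|\widehat{E}(m)\right|^{k} = \left|\widehat{E}(m)\right|^{\frac{k-1}{2}}\cdot \left|\widehat{E}(m)\right|^{\frac{k+1}{2}},$$
which is a genuine factorization of nonnegative reals with integer exponents, so no convergence or branch issues arise.

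Next I would apply the Cauchy--Schwarz inequality to the sum over $m\in\mathbb F_q^d$:
$$\sum_{m\in \mathbb F_q^d}\left|\widehat{E}(m)\right|^{k} \le \left(\sum_{m\in \mathbb F_q^d}\left|\widehat{E}(m)\right|^{k-1}\right)^{\frac12}\left(\sum_{m\in \mathbb F_q^d}\left|\widehat{E}(m)\right|^{k+1}\right)^{\frac12}.$$
Since $k-1$ and $k+1$ are both even, Lemma~\ref{Lem3.3} applies to each factor on the right, giving
$$\sum_{m\in \mathbb F_q^d}\left|\widehat{E}(m)\right|^{k-1} = q^{-d(k-1)+d}\,\Lambda_{k-1}(E) = q^{-dk+2d}\,\Lambda_{k-1}(E)$$
and
$$\sum_{m\in \mathbb F_q^d}\left|\widehat{E}(m)\right|^{k+1} = q^{-d(k+1)+d}\,\Lambda_{k+1}(E) = q^{-dk}\,\Lambda_{k+1}(E).$$

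Finally I would substitute these two identities into the Cauchy--Schwarz bound and collect the powers of $q$: the product of the prefactors is $q^{-dk+2d}\cdot q^{-dk} = q^{-2dk+2d}$, whose square root is $q^{-dk+d}$, yielding exactly
$$\sum_{m\in \mathbb F_q^d}\left|\widehat{E}(m)\right|^{k} \le q^{-dk+d}\left(\Lambda_{k-1}(E)\,\Lambda_{k+1}(E)\right)^{\frac12},$$
as desired. There is essentially no serious obstacle here: the only point requiring a moment's care is confirming that the split exponents $(k\pm1)/2$ are honest nonnegative integers (so that $\left|\widehat{E}(m)\right|^{(k\pm1)/2}$ are legitimate terms to feed into Cauchy--Schwarz), which follows immediately from $k$ being odd and at least $3$.
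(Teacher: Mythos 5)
Your proof is correct and is essentially the paper's own argument: the paper invokes Lemma~\ref{Lem3.3} for the even exponents $k-1$ and $k+1$ and then ``interpolates,'' which is precisely the Cauchy--Schwarz step you carry out explicitly. Your write-up just makes the interpolation concrete; the exponent bookkeeping checks out.
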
 
\begin{proof} By Lemma \ref{Lem3.3}, we have
\begin{equation}\label{e1} \sum_{m\in \mathbb F_q^d} \left|\widehat{E}(m)\right|^{k-1} = q^{-d(k-1)+d} \Lambda_{k-1}(E) \end{equation}
and 
\begin{equation}\label{e2}\sum_{m\in \mathbb F_q^d} \left|\widehat{E}(m)\right|^{k+1} = q^{-d(k+1)+d} \Lambda_{k+1}(E). \end{equation}
Interpolating (\ref{e1}) and (\ref{e2}), we complete the proof.
\end{proof}

 The Lemma below follows immediately by combining Lemma \ref{Lem3.3} and Lemma \ref{Lem3.4} with Lemma \ref{keylem}.
\begin{lemma}\label{keylem1}  Let $E \subset \mathbb F_q^d.$ \\
$(1)$ If $k\ge 2$ is an even integer and $|E|^k \gtrsim q^{(d+1)/2} \Lambda_k(E)$, then 
$$ \Delta_k(E) \supset \mathbb F_q^* ~~\mbox{for even}~~ d\ge 2, ~~\mbox{and}~~\Delta_k(E)=\mathbb F_q ~~\mbox{for odd}~~ d\ge 3.$$

\noindent (2) If $k\ge 3$ is odd and  $|E|^k \gtrsim q^{(d+1)/2}  \left( \Lambda_{k-1}(E) \Lambda_{k+1}(E) \right)^{1/2}$, then 
$$ \Delta_k(E) \supset \mathbb F_q^* ~~\mbox{for even}~~ d\ge 2, ~~\mbox{and}~~\Delta_k(E)=\mathbb F_q ~~\mbox{for odd}~~ d\ge 3.$$
\end{lemma}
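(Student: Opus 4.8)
The plan is to substitute the energy identities of Lemmas~\ref{Lem3.3} and~\ref{Lem3.4} directly into the hypothesis~\eqref{C1} of Lemma~\ref{keylem}; once this is done the statement is immediate, and the only real content is elementary bookkeeping with powers of $q$.

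First I would treat part $(1)$, where $k\ge 2$ is even. By Lemma~\ref{Lem3.3} we have $\sum_{m\in\mathbb F_q^d}|\widehat E(m)|^k=q^{-dk+d}\Lambda_k(E)$, so the right-hand side of~\eqref{C1} equals
$$ c\,q^{dk-\frac{d+1}{2}}\cdot q^{-dk+d}\,\Lambda_k(E)=c\,q^{\frac{d-1}{2}}\,\Lambda_k(E), $$
using the identity $dk-\frac{d+1}{2}-dk+d=\frac{d-1}{2}$. Multiplying~\eqref{C1} through by $q$, we find that~\eqref{C1} holds as soon as $|E|^k>c\,q^{\frac{d+1}{2}}\Lambda_k(E)$. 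But the hypothesis $|E|^k\gtrsim q^{(d+1)/2}\Lambda_k(E)$ is precisely the assertion that $|E|^k\ge C\,q^{(d+1)/2}\Lambda_k(E)$ for a suitably large constant $C$, and taking $C$ large enough (for instance, exceeding the constant needed in Lemma~\ref{keylem}) delivers~\eqref{C1}. Lemma~\ref{keylem} then yields $\Delta_k(E)\supset\mathbb F_q^*$ for even $d\ge 2$ and $\Delta_k(E)=\mathbb F_q$ for odd $d\ge 3$.

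For part $(2)$, where $k\ge 3$ is odd, I would run the same argument with Lemma~\ref{Lem3.4} in place of Lemma~\ref{Lem3.3}: it supplies the upper bound $\sum_{m\in\mathbb F_q^d}|\widehat E(m)|^k\le q^{-dk+d}\big(\Lambda_{k-1}(E)\,\Lambda_{k+1}(E)\big)^{1/2}$. Since~\eqref{C1} only requires its right-hand side to be smaller than $|E|^k/q$, an upper bound for the Fourier $\ell^k$-sum is exactly what is needed. The same exponent computation shows the right-hand side of~\eqref{C1} is at most $c\,q^{\frac{d-1}{2}}\big(\Lambda_{k-1}(E)\,\Lambda_{k+1}(E)\big)^{1/2}$, so~\eqref{C1} holds whenever $|E|^k>c\,q^{\frac{d+1}{2}}\big(\Lambda_{k-1}(E)\,\Lambda_{k+1}(E)\big)^{1/2}$, which is the stated hypothesis; one further application of Lemma~\ref{keylem} finishes the proof.

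I do not expect any genuine obstacle here, as the lemma is only a repackaging of Lemma~\ref{keylem} through the two energy identities. The two points that warrant a moment's care are the exponent arithmetic — making sure that after clearing the factor $1/q$ the target power is exactly $q^{(d+1)/2}$ — and, in part $(2)$, the observation that replacing $\sum_m|\widehat E(m)|^k$ by an upper bound keeps~\eqref{C1} pointing in the favorable direction.
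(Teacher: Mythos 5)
Your proof is correct and is exactly the argument the paper intends: the paper gives no written proof, stating only that the lemma "follows immediately by combining Lemma \ref{Lem3.3} and Lemma \ref{Lem3.4} with Lemma \ref{keylem}," and your exponent computation $q^{dk-\frac{d+1}{2}}\cdot q^{-dk+d}=q^{\frac{d-1}{2}}$ together with the remark that an upper bound on $\sum_m|\widehat E(m)|^k$ suffices in part $(2)$ is precisely the omitted bookkeeping.
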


\section{Estimate of $k$-energy}

Notice from Lemma \ref{keylem1} that  main task  for the $k$-resultant modulus set results is to estimate upper bounds of
$ \Lambda_k(E)$ for even $k\ge 2$  and $\Lambda_{k-1}(E) \Lambda_{k+1}(E)$ for odd $k\ge 3.$
It is obvious that $\Lambda_2(E)=|E|$ for $E\subset \mathbb F_q^d$. 
For even $k\ge 4$, we simply see that $\Lambda_k(E) \le |E|^{k-1}$ for $E\subset \mathbb F_q^d.$
Note that this estimate is sharp in the case when $q=p^2$ for a prime $p$ and $E=\mathbb F_p^d.$ 
Combining the trivial estimate of $\Lambda_k(E)$ with Lemma \ref{keylem1},  we recover the $(d+1)/2$ exponent, the sharp one for general odd dimensions and $k\ge 2.$
On the other hand, if $k\ge 4$ is even and the set $E$ is contained in a regular variety $V\subset \mathbb F_q^d,$ 
we shall obtain much better upper bound of $\Lambda_k(E)$  than the trivial one. This enables us to prove our main results.
We begin by showing that if $E$ lies on a regular variety $V\subset \mathbb F_q^d$ then an upper bound of  the $k$-energy $\Lambda_k(E)$ can be written in terms of the $(k-2)$-energy.

\begin{lemma}\label{mlem} Let $V\subset \mathbb F_q^d$ be a regular variety.
Then  if $k\ge 4$ is even and $E\subset V,$   we have
$$ \Lambda_{k} (E) \lesssim q^{d-1} \Lambda_{k-2}(E) +q^{-1} |E|^{k-1}.$$
\end{lemma}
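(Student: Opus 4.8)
The plan is to expand $\Lambda_k(E)$ by separating the summation variables according to the constraint $x^1+\cdots+x^m = x^{m+1}+\cdots+x^k$ (where $k=2m$), and to use the Fourier-analytic description of $\Lambda_k(E)$ together with the Fourier decay of the regular variety $V$. Concretely, writing $\Lambda_k(E) = q^{dk-d}\sum_{m\in\mathbb F_q^d}|\widehat E(m)|^k$ from Lemma \ref{Lem3.3}, I would instead work directly on the counting side: group the $k$-tuple $(x^1,\dots,x^k)$ by fixing the value $v = x^1+\cdots+x^{m-1} - x^{m+1}-\cdots-x^{k-1} \in \mathbb F_q^d$ of all but two of the variables (say the last pair on each side, or more symmetrically one from each half), so that the remaining constraint becomes $x^m - x^k = -v$, i.e. $x^m$ and $x^k$ are two points of $E\subset V$ differing by a fixed vector. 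The number of such pairs is controlled by the incidence quantity $|E\cap(E+v)|$, and $\sum_v |E\cap(E+v)|$-type sums are exactly what the $(k-2)$-energy and the Fourier decay of $V$ govern.

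The key steps, in order, are: (i) rewrite $\Lambda_k(E)$ as $\sum_{v\in\mathbb F_q^d} \lambda_{k-2}(v)\,\mu(v)$, where $\lambda_{k-2}(v)$ counts $(k-2)$-tuples from $E$ summing (with the appropriate signs) to $v$ and $\mu(v) = |\{(x,y)\in E\times E : x - y = v\}| = |E\cap(E+v)|$; (ii) split off the diagonal term $v=0$, which contributes $\lambda_{k-2}(0)\cdot\mu(0) = \Lambda_{k-2}(E)\cdot|E|$ — note $\mu(0)=|E|$ — wait, more precisely the $v=0$ contribution is bounded by $|E|\,\Lambda_{k-2}(E)$, and this will be absorbed since $|E|\le q^{d-1}\lesssim |V|$; actually the cleaner route is (ii') to bound $\mu(v)$ for $v\neq 0$: since $E\subset V$ and $V$ is a regular variety, a standard Fourier computation gives $|E\cap(E+v)| \le q^{-1}|E|^2 + q^{(d-1)/2}\cdot(\text{something})$ — here one uses $\widehat{E\cap(E+v)}$ expanded via $\sum_m |\widehat E(m)|^2\chi(m\cdot v)$ and the decay $|\widehat V(m)|\lesssim q^{-(d+1)/2}$ only indirectly, or directly the bound on the number of pairs of points of $V$ with fixed difference; (iii) substitute these two regimes back, using $\sum_v \lambda_{k-2}(v) = |E|^{k-2}$ and $\sum_v \lambda_{k-2}(v)^? $ — actually just $\sum_v \lambda_{k-2}(v) = |E|^{k-2}$ suffices for the $q^{-1}|E|^2$ term, giving $q^{-1}|E|^{2}\cdot|E|^{k-2} = q^{-1}|E|^{k-1}$, and the remaining term, after pairing the $v=0$ contribution and the error from $\mu(v)$ on $v\neq0$, collapses to $\lesssim q^{d-1}\Lambda_{k-2}(E)$.

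I would carry out step (ii) as follows: for fixed $v$, $|E\cap(E+v)| = \sum_{x} E(x)E(x-v) = q^{2d}\sum_{m}|\widehat E(m)|^2 \chi(m\cdot v)$; summing the ``main'' piece and controlling the oscillatory piece requires knowing that $E$ inherits good $L^2$-Fourier concentration near $0$ from lying on $V$. Here the cleanest self-contained estimate is: the number of pairs $(x,y)\in V\times V$ with $x-y = v$ (for $v\neq 0$) is $q^{d-2} + O(q^{(d-1)/2} \cdot q^{?})$ by a direct count using $|\widehat V(m)|\lesssim q^{-(d+1)/2}$, so that for $E\subset V$ one gets $\mu(v)\le$ (number of such pairs in $V$) only in a weak sense — instead I expect the paper pushes the Fourier expansion through $E$ and $V$ simultaneously, writing $E = E\cdot V$ pointwise and expanding one factor of $V$.

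\textbf{Main obstacle.} The delicate point is step (ii): obtaining the bound $|E\cap(E+v)| \lesssim q^{-1}|E|^2 + (\text{controllable error})$ uniformly in $v\neq 0$ and in a form where, after summing against $\lambda_{k-2}(v)$, the error term is \emph{exactly} of size $q^{d-1}\Lambda_{k-2}(E)$ rather than something larger. This requires using the hypothesis $E\subset V$ with $V$ regular in a quantitatively tight way — plausibly by expanding $|E\cap(E+v)| = \sum_x E(x)E(x-v)$, replacing one copy of $E$ by $E\cdot V$, applying Fourier inversion to the $V$-factor, and invoking $|\widehat V(m)|\lesssim q^{-(d+1)/2}$ together with Plancherel for $E$ — and then carefully tracking that the resulting double sum over $v$ and the Fourier variable reorganizes into $q^{d-1}\sum_v \lambda_{k-2}(v)\cdot(\text{localized factor}) \lesssim q^{d-1}\Lambda_{k-2}(E)$. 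Everything else is bookkeeping with the orthogonality relations and the identity $\sum_v\lambda_{k-2}(v)=|E|^{k-2}$.
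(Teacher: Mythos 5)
Your starting decomposition is, at bottom, the same as the paper's: replacing one copy of $E$ by $V$ (i.e.\ bounding $\Lambda_k(E)\le\sum_{x^1,\dots,x^{k-1}\in E}V(x^1+\cdots+x^{k/2}-x^{k/2+1}-\cdots-x^{k-1})$) and Fourier-expanding the $V$-factor yields a main term $|V|\,|E|^{k-1}q^{-d}\approx q^{-1}|E|^{k-1}$ plus a nonzero-frequency piece bounded by $q^{d(k-1)-(d+1)/2}\sum_{m}|\widehat E(m)|^{k-1}$. But the step you flag as the ``main obstacle'' is a genuine gap, and the route you propose for closing it --- a bound on $\mu(v)=|E\cap(E+v)|$ that is uniform in $v\ne 0$, summed against $\lambda_{k-2}(v)$ --- cannot work. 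The best such pointwise bound obtainable from $|\widehat V(m)|\lesssim q^{-(d+1)/2}$ together with Plancherel for $E$ has the form $\mu(v)\lesssim q^{-1}|E|+q^{(d-1)/2}|E|^{1/2}$ (note the main term is $q^{-1}|E|$, not $q^{-1}|E|^2$, once a copy of $E$ has been replaced by $V$). Summing the error against $\sum_v\lambda_{k-2}(v)=|E|^{k-2}$ produces $q^{(d-1)/2}|E|^{k-3/2}$, which exceeds $q^{-1}|E|^{k-1}$ whenever $|E|\le q^{d+1}$ (i.e.\ always) and exceeds $q^{d-1}\Lambda_{k-2}(E)$ throughout the range relevant to Theorems \ref{T1} and \ref{T2}, since $\Lambda_{k-2}(E)$ can be as small as $\max\left(q^{-d}|E|^{k-2},\,|E|^{(k-2)/2}\right)$. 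More fundamentally, no estimate that is uniform in $v$ can reorganize itself into the single weighted value $q^{d-1}\lambda_{k-2}(0)=q^{d-1}\Lambda_{k-2}(E)$.

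The missing idea is a bootstrap. After reaching $q^{d(k-1)-(d+1)/2}\sum_m|\widehat E(m)|^{k-1}$, the paper applies Cauchy--Schwarz to the odd moment (Lemma \ref{Lem3.4}), $\sum_m|\widehat E(m)|^{k-1}\le\bigl(\sum_m|\widehat E(m)|^{k-2}\bigr)^{1/2}\bigl(\sum_m|\widehat E(m)|^{k}\bigr)^{1/2}$, converting the nonzero-frequency piece into $q^{(d-1)/2}\bigl(\Lambda_{k-2}(E)\bigr)^{1/2}\bigl(\Lambda_{k}(E)\bigr)^{1/2}$ --- crucially, the quantity $\Lambda_k(E)$ being estimated reappears on the right-hand side. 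This gives $\Lambda_k(E)\le Cq^{-1}|E|^{k-1}+Cq^{(d-1)/2}\sqrt{\Lambda_{k-2}(E)}\sqrt{\Lambda_k(E)}$, and solving this quadratic inequality in $\sqrt{\Lambda_k(E)}$ delivers exactly $\Lambda_k(E)\lesssim q^{d-1}\Lambda_{k-2}(E)+q^{-1}|E|^{k-1}$. Your outline never introduces this self-referential step, and without it the argument does not close.
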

\begin{proof}  By the definition of $\Lambda_k(E)$,  it follows
$$\Lambda_{k}(E)=\sum_{x^1, \ldots, x^k\in E} \delta_0( x^1+\cdots+x^{k/2} - x^{k/2 +1} - \cdots -x^k) $$
where $\delta_0(x)=1$ if $x=(0,\ldots,0)$ and  $0$ otherwise.
Since $E\subset V$, it is clear that
$$ \Lambda_{k}(E) \le \sum_{x^1, \ldots, x^{k-1} \in E} V(x^1+\cdots+x^{k/2} - x^{k/2 +1} - \cdots -x^{k-1}).$$
Using the Fourier inversion theorem to $V(x^1+\cdots+x^{k/2} - x^{k/2 +1} - \cdots -x^{k-1})$ and the definition of the Fourier transform,  we see that
\begin{align*}  \Lambda_{k}(E) \le&~ q^{d(k-1)} \sum_{m\in \mathbb F_q^d} |\widehat{V}(m)| |\widehat{E}(m)|^{k-1}\\
=&~q^{d(k-1)} |\widehat{V}(0, \ldots,0)|~ |\widehat{E}(0,\ldots,0)|^{k-1}\\
 &+ q^{d(k-1)} \sum_{m\ne (0,\ldots,0)} 
|\widehat{V}(m)| |\widehat{E}(m)|^{k-1}\\
 :=& \mbox{I} + \mbox{II}.
\end{align*}
Since $V\subset \mathbb F_q^d$ is a regular variety, we see that  $|V|\approx q^{d-1}$ and $ |\widehat{V}(m)| \lesssim q^{-\frac{d+1}{2}} $ for all $m\neq (0,\ldots,0)$.  It therefore follows that
$$ \mbox{I}\sim \frac{|E|^{k-1}}{q}$$
and 
$$ \mbox{II} \lesssim q^{d(k-1) - (d+1)/2} \sum_{m\in \mathbb F_q^d} |\widehat{E}(m)|^{k-1}.$$
%By the Cauchy Schwarz inequality, we can write
%$$ \mbox{II} \lesssim  q^{d(k-1) - (d+1)/2} \left(\sum_{m\in \mathbb F_q^d} |\widehat{E}(m)|^{k}\right)^{\frac{1}{2}}
%\left(\sum_{m\in \mathbb F_q^d} |\widehat{E}(m)|^{k-2}\right)^{\frac{1}{2}}.$$  

%Combining this with Lemma \ref{Lem3.3} yields that
Since $k\ge 4$ is even, $k-1\ge 3$ is odd. By Lemma \ref{Lem3.4}, 
$$ \mbox{II}\lesssim q^{\frac{d-1}{2}} \left(\Lambda_{k-2}(E)\right)^{\frac{1}{2}} \left(\Lambda_{k}(E)\right)^{\frac{1}{2}} .$$

Putting together all estimates above,  it follows that  we can choose a uniform constant $C>0$ such that
$$ \Lambda_{k}(E) \le \frac{C|E|^{k-1}}{q} +C q^{\frac{d-1}{2}} \left(\Lambda_{k-2}(E)\right)^{\frac{1}{2}} \left(\Lambda_{k}(E)\right)^{\frac{1}{2}} .$$
Solving this inequality for $\left(\Lambda_{k}(E)\right)^{\frac{1}{2}}$ implies that
$$ \sqrt{\Lambda_{k}(E)} \le \frac{Cq^{\frac{d+1}{2}}  \sqrt{\Lambda_{k-2}(E)}
+ \sqrt{ C^2 q^{d+1} \Lambda_{k-2}(E) + 4C q |E|^{k-1} }}{2q}. $$
Since $(A+B)^\alpha \approx A^\alpha + B^\alpha, $ the conclusion of our lemma follows from the above inequality.
\end{proof}

Inductively applying Lemma \ref{mlem}, we obtain the following result.
\begin{lemma} \label{mlemm}
Let $E$ be a subset of a regular variety $V\subset \mathbb F_q^d.$
If $k\ge 4$ is even, then
\begin{equation}\label{In1} \Lambda_k(E) \lesssim q^{\frac{ (d-1)(k-2)}{2}} \Lambda_2(E) + q^{-1} |E|^{k-1} \sum_{j=0}^{(k-4)/2}q^{(d-1) j} |E|^{-2j} .\end{equation}
On the other hand, if $k\ge 6$ is even, then 
\begin{equation}\label{In2} \Lambda_k(E) \lesssim q^{\frac{ (d-1)(k-4)}{2}} \Lambda_4(E) + q^{-1} |E|^{k-1} \sum_{j=0}^{(k-6)/2}q^{(d-1) j} |E|^{-2j} .\end{equation}
\end{lemma}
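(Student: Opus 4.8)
The plan is to prove both \eqref{In1} and \eqref{In2} by induction on the even integer $k$, with Lemma \ref{mlem} serving as the single inductive step. Throughout, $k$ is a fixed integer, so the implied constants in the $\lesssim$ notation are permitted to depend on $k$ and $d$; since each pass through Lemma \ref{mlem} multiplies the constant by a fixed uniform factor and the lemma is used only $O(k)$ times, the accumulated constant stays independent of $q$, which is all that Lemma \ref{keylem1} needs downstream.

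For \eqref{In1}, I would take $k=4$ as the base case, where Lemma \ref{mlem} reads $\Lambda_4(E) \lesssim q^{d-1}\Lambda_2(E) + q^{-1}|E|^3$; this is exactly \eqref{In1} at $k=4$, the sum $\sum_{j=0}^{0}$ contributing only the term $1$. For the inductive step, suppose $k\ge 6$ is even and \eqref{In1} holds for $k-2$. Apply Lemma \ref{mlem} to get $\Lambda_k(E) \lesssim q^{d-1}\Lambda_{k-2}(E) + q^{-1}|E|^{k-1}$, substitute the inductive hypothesis for $\Lambda_{k-2}(E)$, and distribute the factor $q^{d-1}$. The leading term becomes $q^{d-1}\cdot q^{(d-1)(k-4)/2}\Lambda_2(E) = q^{(d-1)(k-2)/2}\Lambda_2(E)$, as wanted. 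In the error sum, the factor $q^{d-1}$ shifts every exponent of $q$ by $d-1$; after reindexing $j\mapsto j+1$ one checks that $q^{d-1}\cdot q^{-1}|E|^{k-3}\sum_{j=0}^{(k-6)/2} q^{(d-1)j}|E|^{-2j}$ equals $q^{-1}|E|^{k-1}\sum_{j=1}^{(k-4)/2} q^{(d-1)j}|E|^{-2j}$, and the leftover term $q^{-1}|E|^{k-1}$ coming from Lemma \ref{mlem} supplies precisely the missing $j=0$ term. Combining these yields \eqref{In1} for $k$.

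For \eqref{In2}, the argument is identical, but the descent is stopped one level higher: the base case is $k=6$, where Lemma \ref{mlem} gives $\Lambda_6(E)\lesssim q^{d-1}\Lambda_4(E)+q^{-1}|E|^5$, matching \eqref{In2} at $k=6$. In the inductive step for even $k\ge 8$, one again applies Lemma \ref{mlem} once and inserts the hypothesis for $\Lambda_{k-2}(E)$, with $\Lambda_4(E)$ now playing the role that $\Lambda_2(E)$ played above and the reindexing of the error sum carried out verbatim. (Equivalently, \eqref{In2} follows by iterating Lemma \ref{mlem} only down to the index $4$ rather than all the way to $2$.) At every step the index at which Lemma \ref{mlem} is invoked lies in $\{4,6,\dots,k\}$, hence is even and $\ge 4$, and the condition $E\subset V$ persists, so the hypotheses of Lemma \ref{mlem} are met throughout.

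There is no genuine obstacle in this lemma; the only points demanding care are the elementary index arithmetic in the geometric-type error sums — keeping the ranges $(k-4)/2$, $(k-6)/2$ and the exponents $k-1-2j$ consistent under the shift $j\mapsto j+1$ — and the observation, noted above, that allowing the $\lesssim$-constants to depend on the fixed parameters $k$ and $d$ (but not on $q$) is harmless.
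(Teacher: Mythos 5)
Your proposal is correct and follows essentially the same route as the paper: induction on the even integer $k$ with Lemma \ref{mlem} as the single inductive step, base case $k=4$ for \eqref{In1} (resp.\ $k=6$ for \eqref{In2}), and the same reindexing $j\mapsto j+1$ of the error sum with the leftover $q^{-1}|E|^{k-1}$ term supplying the $j=0$ summand. The paper phrases the step as passing from $k$ to $k+2$ rather than from $k-2$ to $k$, but the computation is identical.
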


\begin{proof}  We prove (\ref{In1}) by an induction argument.
When $k=4$,   (\ref{In1})  clearly holds from Lemma \ref{mlem}.
Assume that the statement holds for an even integer $k\ge 4$. Namely, we assume that
\begin{equation}\label{A1} \Lambda_k(E) \lesssim q^{\frac{ (d-1)(k-2)}{2}} \Lambda_2(E) + q^{-1} |E|^{k-1} \sum_{j=0}^{(k-4)/2}q^{(d-1) j} |E|^{-2j}.
\end{equation}
Then it suffices to prove that 
$$ \Lambda_{k+2}(E) \lesssim q^{\frac{ (d-1)k}{2}} \Lambda_2(E) + q^{-1} |E|^{k+1} \sum_{j=0}^{(k-2)/2}q^{(d-1) j} |E|^{-2j}.$$
This shall follow from Lemma \ref{mlem} and the assumption (\ref{A1}). More precisely, we have
\begin{align*} \Lambda_{k+2}(E)&\lesssim q^{d-1} \Lambda_{k}(E) +q^{-1} |E|^{k+1}\\
 &\lesssim q^{d-1} \left( q^{\frac{ (d-1)(k-2)}{2}} \Lambda_2(E) + q^{-1} |E|^{k-1} \sum_{j=0}^{(k-4)/2}q^{(d-1) j} |E|^{-2j}   \right) + q^{-1} |E|^{k+1}\\
 &=q^{\frac{ (d-1)k}{2}} \Lambda_2(E) +  q^{d-2}|E|^{k-1} \sum_{j=0}^{(k-4)/2}q^{(d-1) j} |E|^{-2j}    + q^{-1} |E|^{k+1}\\
 &= q^{\frac{ (d-1)k}{2}} \Lambda_2(E) +  q^{d-2}|E|^{k-1}\sum_{j=1}^{(k-2)/2}q^{(d-1) (j-1)} |E|^{-2(j-1)}    + q^{-1} |E|^{k+1}\\
 &=q^{\frac{ (d-1)k}{2}} \Lambda_2(E) + q^{-1} |E|^{k+1} \sum_{j=1}^{(k-2)/2}q^{(d-1) j} |E|^{-2j}+  q^{-1} |E|^{k+1}\\
 &=q^{\frac{ (d-1)k}{2}} \Lambda_2(E) +q^{-1} |E|^{k+1} \sum_{j=0}^{(k-2)/2}q^{(d-1) j} |E|^{-2j}.
\end{align*}

The same argument  also yields (\ref{In2}). We leave the detail to the readers.
\end{proof}

The following corollary of Lemma \ref{mlemm} will be directly used in proving our main results.
\begin{corollary} \label{cor4.3}
Let $E$ be a subset of a regular variety $V\subset \mathbb F_q^d.$
In addition, assume that $|E| > q^{(d-1)/2}.$\\
$(1)$ If $k\ge 2$ is even, then
$$ \Lambda_k(E) \lesssim q^{\frac{ (d-1)(k-2)}{2}} |E| + q^{-1} |E|^{k-1}.$$
\noindent $(1^*)$ If $k\ge 3$ is odd, then
$$ \Lambda_{k-1}(E) \Lambda_{k+1}(E) \lesssim  q^{(d-1)(k-2)} |E|^2 + q^{\frac{(d-1)(k-3)-2}{2}} |E|^{k+1} +  q^{-2} |E|^{2k-2}.$$
\noindent $(2)$ If $k\ge 6$ is even, then 
$$ \Lambda_k(E) \lesssim q^{\frac{ (d-1)(k-4)}{2}} \Lambda_4(E) + q^{-1} |E|^{k-1} .$$
\noindent $(2^*)$ If $k\ge 7$ is odd, then
$$ \Lambda_{k-1}(E) \Lambda_{k+1}(E) \lesssim  q^{(d-1)(k-4)} \Lambda_4^2(E) + q^{\frac{(d-1)(k-5)-2}{2}} \Lambda_4(E) |E|^k  + q^{-2} |E|^{2k-2}.$$
\end{corollary}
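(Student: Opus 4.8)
The plan is to deduce all four estimates from Lemma~\ref{mlemm}, the identity $\Lambda_2(E)=|E|$, and the hypothesis $|E|>q^{(d-1)/2}$, which I will use in the equivalent form $q^{d-1}/|E|^2<1$. Throughout, the implied constants in $\lesssim$ are allowed to depend on the fixed integer $k$.

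First I would establish $(1)$. When $k=2$ the claim is immediate, since $\Lambda_2(E)=|E|$ and the right-hand side is $\gtrsim |E|$. For even $k\ge 4$, substitute $\Lambda_2(E)=|E|$ into \eqref{In1} to obtain
$$\Lambda_k(E)\lesssim q^{\frac{(d-1)(k-2)}{2}}|E|+q^{-1}|E|^{k-1}\sum_{j=0}^{(k-4)/2}\left(\frac{q^{d-1}}{|E|^2}\right)^{\!j}.$$
The geometric sum has at most $(k-2)/2$ terms, each at most $1$ because $q^{d-1}/|E|^2<1$; hence it is bounded by a constant depending only on $k$ and may be absorbed into the implied constant, giving $(1)$. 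Statement $(2)$ follows the same way from \eqref{In2}, keeping the $\Lambda_4(E)$ term as is and bounding $\sum_{j=0}^{(k-6)/2}(q^{d-1}/|E|^2)^j$ by a constant.

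For the odd statements $(1^*)$ and $(2^*)$, the point is that $k-1$ and $k+1$ are even, so I can feed the already-proved even estimates into the product $\Lambda_{k-1}(E)\,\Lambda_{k+1}(E)$. For $(1^*)$, I would apply $(1)$ with $k$ replaced by $k-1$ and by $k+1$, and then multiply the two binomial bounds. Expanding gives four terms; using $\tfrac{(d-1)(k-3)}{2}+\tfrac{(d-1)(k-1)}{2}=(d-1)(k-2)$ one checks that three of them are exactly the terms displayed in $(1^*)$, namely $q^{(d-1)(k-2)}|E|^2$, $q^{\frac{(d-1)(k-3)-2}{2}}|E|^{k+1}$, and $q^{-2}|E|^{2k-2}$. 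The fourth, mixed term $q^{\frac{(d-1)(k-1)-2}{2}}|E|^{k-1}$ is redundant: its ratio to $q^{\frac{(d-1)(k-3)-2}{2}}|E|^{k+1}$ equals $q^{d-1}/|E|^2<1$, so it is dominated and may be dropped. Statement $(2^*)$ is handled identically, now applying $(2)$ with $k$ replaced by $k-1$ and $k+1$; expanding $\Lambda_{k-1}(E)\,\Lambda_{k+1}(E)$ produces the three displayed terms $q^{(d-1)(k-4)}\Lambda_4^2(E)$, $q^{\frac{(d-1)(k-5)-2}{2}}\Lambda_4(E)|E|^k$, $q^{-2}|E|^{2k-2}$, plus the mixed term $q^{\frac{(d-1)(k-3)-2}{2}}\Lambda_4(E)|E|^{k-2}$, whose ratio to $q^{\frac{(d-1)(k-5)-2}{2}}\Lambda_4(E)|E|^k$ is again $q^{d-1}/|E|^2<1$.

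The only delicate points are the exponent bookkeeping in the last two parts and the recurring observation that, among the four terms in each product, one is always subsumed by another precisely because the hypothesis $|E|>q^{(d-1)/2}$ forces $q^{d-1}/|E|^2<1$; this same inequality is also what renders the geometric sums in $(1)$ and $(2)$ harmless. I do not anticipate any substantive obstacle beyond keeping the algebra straight.
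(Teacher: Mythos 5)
Your proposal is correct and follows essentially the same route as the paper: parts $(1)$ and $(2)$ come from Lemma \ref{mlemm} together with $\Lambda_2(E)=|E|$ and the bound $q^{d-1}/|E|^2<1$ on the geometric sum, and parts $(1^*)$ and $(2^*)$ come from multiplying the even-index bounds and discarding the one cross term that is dominated because $q^{d-1}/|E|^2<1$. The exponent bookkeeping you carry out matches the paper's computation exactly.
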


\begin{proof} If $k=2$, then the statement $(1)$ is trivial, because $\Lambda_2(E)=|E|.$ Thus, to prove $(1)$, we may assume that $k\ge 4$ is even.  Since $\Lambda_2(E)=|E|,$ and $q^{(d-1)} |E|^{-2} <1$ by the hypothesis,  the statement $(1)$ follows immediately from the first part of Lemma \ref{mlemm}.
To prove $(1^*)$, notice that $(1)$ implies that for odd $k\ge 3,$ 
\begin{align*}&\Lambda_{k-1}(E)~ \Lambda_{k+1}(E) \\
\lesssim& \left(q^{\frac{ (d-1)(k-3)}{2}} |E| + q^{-1} |E|^{k-2}\right)\left(q^{\frac{ (d-1)(k-1)}{2}} |E| + q^{-1} |E|^{k}\right)\\
= &q^{(d-1)(k-2)} |E|^2 + q^{\frac{(d-1)(k-3)-2}{2}} |E|^{k+1} + q^{\frac{(d-1)(k-1)-2}{2}} |E|^{k-1} + q^{-2} |E|^{2k-2}.\end{align*}
Then the statement $(1^*)$ follows by observing that  if $|E|> q^{(d-1)/2},$ the second term above dominates the third term. Since we have assumed $|E|> q^{(d-1)/2}$, the statement $(2)$ is a direct consequence of the second part of  Lemma \ref{mlemm}. 
Finally, to prove $(2^*)$, notice from $(2)$ that for odd $k\ge 7,$ we have
\begin{align*}\Lambda_{k-1}(E)~ \Lambda_{k+1}(E)  & \lesssim q^{(d-1)(k-4)} \Lambda_4^2(E) + q^{\frac{(d-1)(k-5)-2}{2}} \Lambda_4(E) |E|^k  +
\\
& \hskip0,5in+ q^{\frac{(d-1)(k-3)-2}{2}} \Lambda_4(E) |E|^{k-2} + q^{-2} |E|^{2k-2}.\end{align*}
When $|E|> q^{(d-1)/2}$, it is easy to see that the second term is greater than the third term. Hence, the proof of the statement $(2^*)$ is complete.
\end{proof}

\section{Proofs of main results (Theorems \ref{T1} and \ref{T2})}
The proofs of main theorems will be complete by a direct application of Lemma \ref{keylem1} with Corollary \ref{cor4.3}.
%In addition,  we shall use the previously known $4$-energy estimates on spheres and paraboloids.
Some routine algebra will be also needed for deriving the exact results.

\subsection{Proof of Theorem \ref{T1} (on regular varieties)}
We restate and prove Theorem \ref{T1}. \\

\noindent {\bf Theorem \ref{T1}.} Suppose that  $V\subset \mathbb F_q^d$ is a regular variety.
In addition, assume that $k\ge 3$ is an integer and $E\subset V.$
Then if $|E|\ge C q^{\frac{d-1}{2} +\frac{1}{k-1}}$ for a sufficiently large $C>0$,  we have 
 $$\Delta_k(E) \supset \mathbb F_q^* \quad \mbox{for even}~~d\ge 2,$$
and 
$$\Delta_k(E) =\mathbb F_q \quad \mbox{for odd}~~d\ge 3.$$
\begin{proof} {\bf Case 1.} Assume that $k\ge 4$ is an even integer. By $(1)$ of Lemma \ref{keylem1} and $(1)$ of Corollary \ref{cor4.3}, it suffices to prove that
if $E\subset V$ with $|E|\ge C q^{\frac{d-1}{2} +\frac{1}{k-1}},$ then 
$$ |E|^k\gtrsim q^{\frac{d+1}{2}} (q^{\frac{ (d-1)(k-2)}{2}} |E| + q^{-1} |E|^{k-1}).$$
By a direct comparison, this inequality follows.\\

{\bf Case 2.} Assume that $k\ge 3 $ is an odd integer. By $(2)$ of Lemma \ref{keylem1}, it is enough to prove that if $E\subset V$ with $|E|\ge C q^{\frac{d-1}{2} +\frac{1}{k-1}},$ then
 $$|E|^{2k} \gtrsim q^{d+1}  \left( \Lambda_{k-1}(E) \Lambda_{k+1}(E) \right).$$
Invoking $(1^*)$ of Corollary \ref{cor4.3},  we only need to prove that if $|E|\ge C q^{\frac{d-1}{2} +\frac{1}{k-1}}$, then
$$|E|^{2k} \gtrsim q^{d+1} \left(q^{(d-1)(k-2)} |E|^2 + q^{\frac{(d-1)(k-3)-2}{2}} |E|^{k+1} +  q^{-2} |E|^{2k-2}\right).$$
Since $|E|>q^{\frac{d-1}{2} +\frac{1}{k-1}}$, this inequality is simply proved by comparing $|E|^{2k}$ with each term in the right hand side.
\end{proof}

\subsection{Proof of Theorem \ref{T2} (on nondegenerate regular curve)}
To complete the proof of Theorem \ref{T2}, we shall invoke the known results on the extension problem for a nondegenerate regular curve. we begin by reviewing  the extension problems for varieties of $\mathbb F_q^d.$ Let $V\subset \mathbb F_q^d, d\ge 2,$ be an algebraic variety. Denote by $d\sigma$ the normalized surface measure on $V$, which is defined by the relation
$$ \int f(x)~d\sigma(x) =\frac{1}{|V|} \sum_{x\in V} f(x) \quad\mbox{for}~~f:\mathbb F_q^d \to \mathbb C.$$
For  a function $f:\mathbb F_q^d \to \mathbb C$ and the normalized surface measure $d\sigma$ on $V$,  the inverse Fourier transform of the measure $f d\sigma$ is defined as
$$ (fd\sigma)^\vee(m):= \int \chi(m\cdot x)~f(x)~d\sigma(x) =\frac{1}{|V|} \sum_{x\in V} \chi(m\cdot x)~f(x),$$
where $m$ is an element of $\mathbb F_q^d$ with the counting measure $dm.$ Then by the usual definition of norms,  we can write that for $1\le p,r < \infty$,
$$ \|(fd\sigma)^\vee\|_{L^r(\mathbb F_q^d, dm)} := \left( \sum_{m\in \mathbb F_q^d} \left| (fd\sigma)^\vee(m)\right|^r\right)^{\frac{1}{r}},  $$
and
$$ \|f\|_{L^p(V, d\sigma)} := \left( \frac{1}{|V|} \sum_{x\in V} |f(x)|^p \right)^{\frac{1}{p}}.$$
With the above notation, we say that the $L^p\rightarrow L^r$ extension estimate for $V$ holds if there is a constant $C>0$ independent of $q$, the size of the underlying finite field $\mathbb F_q,$ such that
$$ \|(fd\sigma)^\vee\|_{L^r(\mathbb F_q^d, dm)} \le C \|f\|_{L^p(V, d\sigma)} \quad \mbox{for all functions}~~f:\mathbb F_q^d \to \mathbb C.$$

When  sets $E\subset \mathbb F_q^2$ are contained in a nondegenerate regular curve $V$, a sharp upper bound of $\Lambda_4(E)$ can be obtained by using  the $L^2\rightarrow L^4$ extension estimate for the variety $V,$ a consequence of Theorem 1.1 in \cite{KS10}. 

\begin{lemma}\label{lem5.1} Let $V\subset \mathbb F_q^2$ be a nondegenerate regular curve. Then there is a constant $C>0$ independent of $q$ such that
$$ \Lambda_4(E) \le C |E|^2 \quad \mbox{for all}~~ E\subset V.$$  \end{lemma}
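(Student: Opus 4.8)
The statement $\Lambda_4(E) \le C|E|^2$ should follow by unwinding the definition of $\Lambda_4(E)$, expressing it as an $L^4$ norm of a Fourier extension, and invoking the $L^2 \to L^4$ extension estimate for a nondegenerate regular curve (Theorem~1.1 of \cite{KS10}). First I would recall from Lemma~\ref{Lem3.3} that for $E \subset \mathbb F_q^2$,
$$\sum_{m \in \mathbb F_q^2} |\widehat{E}(m)|^4 = q^{-4\cdot 2 + 2}\,\Lambda_4(E) = q^{-6}\,\Lambda_4(E),$$
so it suffices to bound $\sum_m |\widehat{E}(m)|^4$ by $C q^{-6}|E|^2$.

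\medskip

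The key step is to relate $\widehat{E}$ to the extension operator on $V$. Since $E \subset V$, write $E = f\,d\sigma$ in the sense that $f = \chi_E \cdot \frac{|V|}{1}$ on $V$; more precisely, for $m \in \mathbb F_q^2$,
$$\widehat{E}(m) = q^{-2}\sum_{x \in E}\chi(-m\cdot x) = q^{-2}\,|V|\,(\chi_E\, d\sigma)^\vee(-m),$$
using the definition of $(f\,d\sigma)^\vee$ and that $E \subset V$. Since $V$ is a regular variety, $|V| \approx q^{d-1} = q$ in dimension $d=2$, so $|\widehat{E}(m)| \approx q^{-1}\,|(\chi_E\,d\sigma)^\vee(-m)|$. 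Therefore
$$\sum_{m \in \mathbb F_q^2} |\widehat{E}(m)|^4 \approx q^{-4}\sum_{m \in \mathbb F_q^2} |(\chi_E\,d\sigma)^\vee(m)|^4 = q^{-4}\,\|(\chi_E\,d\sigma)^\vee\|_{L^4(\mathbb F_q^2,\,dm)}^4.$$
Now apply the $L^2 \to L^4$ extension estimate for the nondegenerate regular curve $V$, which gives $\|(\chi_E\,d\sigma)^\vee\|_{L^4(\mathbb F_q^2)} \le C\,\|\chi_E\|_{L^2(V,\,d\sigma)}$. Since $\|\chi_E\|_{L^2(V,d\sigma)}^2 = |V|^{-1}|E| \approx q^{-1}|E|$, we get $\|(\chi_E\,d\sigma)^\vee\|_{L^4}^4 \lesssim q^{-2}|E|^2$. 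Combining,
$$\sum_{m \in \mathbb F_q^2} |\widehat{E}(m)|^4 \lesssim q^{-4}\cdot q^{-2}\,|E|^2 = q^{-6}\,|E|^2,$$
and hence $\Lambda_4(E) = q^{6}\sum_m |\widehat{E}(m)|^4 \lesssim |E|^2$, as claimed.

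\medskip

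The main thing to verify carefully is the compatibility of normalizations: the paper's Fourier transform carries a $q^{-d}$ factor while $(f\,d\sigma)^\vee$ carries a $|V|^{-1}$ factor, so tracking the exact power of $q$ through the identity $\widehat{E}(m) = q^{-2}|V|\,(\chi_E\,d\sigma)^\vee(-m)$ and through $\|\chi_E\|_{L^2(V,d\sigma)}^2 = |E|/|V|$ is where an error would most likely creep in. The only genuine input beyond bookkeeping is the $L^2 \to L^4$ extension estimate for nondegenerate regular curves, which is exactly the content of Theorem~1.1 of \cite{KS10}; no further obstacle is expected, since a nondegenerate regular curve in $\mathbb F_q^2$ is precisely the class of curves for which that $L^2 \to L^4$ bound is known to hold with a constant independent of $q$.
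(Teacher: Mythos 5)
Your proof is correct and follows essentially the same route as the paper: the only substantive input in both is the $L^2\to L^4$ extension estimate of Theorem 1.1 in \cite{KS10}, and your use of Lemma \ref{Lem3.3} to convert $\sum_m|\widehat{E}(m)|^4$ into $q^{-6}\Lambda_4(E)$ is just a repackaging of the orthogonality computation the paper performs directly on $\sum_m\bigl|\sum_{x\in E}\chi(m\cdot x)\bigr|^4$. The normalization bookkeeping (the factors $q^{-2}$, $|V|\approx q$, and $\|\chi_E\|_{L^2(V,d\sigma)}^2=|E|/|V|$) all checks out.
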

\begin{proof} Theorem 1.1 in \cite{KS10} implies that
$$ \|(fd\sigma)^\vee\|_{L^4(\mathbb F_q^2, dm)} \lesssim \|f\|_{L^2(V, d\sigma)} \quad \mbox{for all functions}~~f:\mathbb F_q^2 \to \mathbb C.$$
Taking $f$ as the characteristic function on the set $E\subset V$,  the expansion of norms yields
$$ \sum_{m\in \mathbb F_q^2} \left|\sum_{x\in E} \chi(m\cdot x)\right|^4 \lesssim q^2 |E|^2,$$
where we also used the fact that $|V|\sim q.$
Write 
$$\sum_{m\in \mathbb F_q^2} \left|\sum_{x\in E} \chi(m\cdot x)\right|^4 =\sum_{m\in \mathbb F_q^2} \sum_{x,y,z,w\in E} \chi(m\cdot(x+y-z-w))$$
and use the orthogonality relation of $\chi.$ Then the statement of the lemma follows.
\end{proof}

Now, we restate and prove Theorem \ref{T2}.\\

\noindent {\bf Theorem \ref{T2}.} Suppose that  $E$ is contained in a nondegenerate regular variety $V\subset \mathbb F_q^2.$  Then if $k\ge 4$ is an integer and  $|E|\ge C q^{\frac{1}{2}+\frac{1}{2k-4}}$ for a sufficiently large constant $C>0$, we have $\mathbb F_q^* \subset \Delta_k(E).$
\begin{proof} {\bf Case 1.}  Assume that $k\ge 4$ is an even integer. If $k=4$, then by $(1)$ of Lemma \ref{keylem1}, it suffice to prove that 
if $|E|\ge C q^{3/4},$ then $|E|^4 \gtrsim q^{3/2} \Lambda_4(E).$ 
Since $\Lambda_4(E) \lesssim |E|^2$ by Lemma \ref{lem5.1}, this clearly holds.
Next, let us assume that $k\ge 6$ is an even integer. Combining $(1)$ of Lemma \ref{keylem1} with $(2)$ of Corollary \ref{cor4.3},  it will be enough to show that
if $|E|\ge C q^{\frac{1}{2}+\frac{1}{2k-4}}$, then 
$$ |E|^k \gtrsim q^{\frac{3}{2}} \left( q^{\frac{k-4}{2}} \Lambda_4(E) + q^{-1} |E|^{k-1}  \right).$$
From the conclusion of Lemma \ref{lem5.1}, that is $\Lambda_4(E) \lesssim |E|^2,$  it suffices to show that
if  $|E|\ge C q^{\frac{1}{2}+\frac{1}{2k-4}},$ then 
$$  |E|^k \gtrsim q^{\frac{3}{2}} \left( q^{\frac{ k-4}{2}} |E|^2 + q^{-1} |E|^{k-1}  \right) =q^{\frac{k-1}{2}} |E|^2 + q^{\frac{1}{2}} |E|^{k-1}.$$
This inequality holds because  $|E|^k $ dominates both $q^{\frac{k-1}{2}} |E|^2$ and $q^{\frac{1}{2}} |E|^{k-1}$
provided that $|E|\ge C q^{\frac{1}{2}+\frac{1}{2k-4}}.$ Thus, we have completed the proof of Theorem \ref{T2} for even integers $k\ge 4.$\\

\noindent {\bf Case 2.} Assume that $k\ge 5$ is an odd integer.
In this case,  applying $(2)$ of Lemma \ref{keylem1} with $d=2,$ it will be enough to show that for every $E\subset V \subset \mathbb F_q^2$ with $|E|\ge C q^{\frac{1}{2}+\frac{1}{2k-4}},$ we have
$$|E|^k \gtrsim q^{3/2}  \left( \Lambda_{k-1}(E) \Lambda_{k+1}(E) \right)^{1/2}$$
or
\begin{equation}\label{eq5.1} |E|^{2k} \gtrsim q^3 \left( \Lambda_{k-1}(E) \Lambda_{k+1}(E) \right).\end{equation}
First, let us prove this for $k=5.$ 
We must prove that if $|E|\ge C q^{2/3}$, then $|E|^{10}\gtrsim q^{3}\left( \Lambda_4(E) \Lambda_{6}(E) \right). $
Since $\Lambda_4(E)\lesssim |E|^2$ by Lemma \ref{lem5.1}, we see from $(2)$ of Corollary \ref{cor4.3} that
$$\Lambda_6(E) \lesssim q |E|^2 + q^{-1} |E|^5.$$
Thus, \eqref{eq5.1} will hold for $k=5$ if we prove that for every $E\subset V$ with $|E|\ge C q^{2/3},$
$$ |E|^{10} \gtrsim q^3 |E|^2 ( q |E|^2 + q^{-1} |E|^5) = q^{4} |E|^4 + q^2 |E|^7.$$
By a direct comparison, this is clearly true and we complete the proof for $k=5.$
Finally, we assume that $k\ge 7$ is an odd integer and prove that if $|E|\ge C q^{\frac{1}{2}+\frac{1}{2k-4}},$ then \eqref{eq5.1} holds. Using $(2^*)$ of Corollary \ref{cor4.3} with $d=2$, it suffices to prove that if $|E|\ge C q^{\frac{1}{2}+\frac{1}{2k-4}},$ then 
$$|E|^{2k} \gtrsim q^3 \left(q^{k-4} \Lambda_4^2(E) + q^{\frac{k-7}{2}} \Lambda_4(E) |E|^k  + q^{-2} |E|^{2k-2}\right). $$
Since $\Lambda(E) \lesssim |E|^2$ by Lemma \ref{lem5.1},  we only need to prove that
if $|E|\ge C q^{\frac{1}{2}+\frac{1}{2k-4}},$ then
$$ |E|^{2k} \gtrsim q^3 \left(q^{k-4} |E|^4 + q^{\frac{k-7}{2}}  |E|^{k+2}  + q^{-2} |E|^{2k-2}\right). $$
This statement is obvious by a direct  calculation, and this completes the proof for odd $k\ge 7.$
\end{proof}
 
%We have seen that the estimate of $\Lambda_4(E)$ is a key ingredient in deriving results on $k$-resultant %modulus set problems. When %$E$ is contained in a general regular variety $V\subset \mathbb F_q^d$, %Lemma \ref{mlem} implies that
%$$ \Lambda_4(E) \lesssim q^{d-1} |E| + q^{-1} |E|^3.$$
%\subsection{Proof of Theorem \ref{T3} (on spheres and paraboloid in even dimensions)}

%{\bf Theorem \label{T3}.}
%Let $E\subset \mathbb F_q^d$ be a subset of the paraboloid $P$ or the sphere $S_j$ with $j\ne 0.$
%If $d\ge 4$ is even, $k\ge 7$, and $ |E|\ge C q^{\frac{d}{2} -\frac{k-4}{2k-5}}$, then  $\Delta_k(E) \supset %\mathbb F_q^*.$
%\begin{proof}
%\end{proof}

%\subsection{Proof of Theorem \ref{T4} (on spheres and paraboloid in odd dimensions)}

%{\bf Theorem \label{T4}.} Let $k\ge 5$ be an integer and $d=4\ell +3$ for some $\ell \in \{0\} \cup \mathbb N.$
%In addition, assume that $-1\in \mathbb F_q$ is not a square number.
%Then if $E\subset \mathbb F_q^d$ is a subset of the paraboloid $P,$  and $|E|\ge
% C q^{\frac{d}{2}-\frac{2k-7}{4k-10}}$,  we have $\Delta_k(E)=\mathbb F_q.$
%\begin{proof}
%\end{proof}


\begin{thebibliography}{7}

\bibitem{BKT04} J.~Bourgain, N.~ Katz, and T.~ Tao,  \emph{A sum-product estimate in finite fields, and applications}, Geom. Funct. Anal. 14 (2004), 27--57.

\bibitem{BHIPR14} J. Bennett, D. Hart, A. Iosevich, J. Pakianathan, M. Rudnev, \emph{Group actions and Geometric combinatorics in $\mathbb{F}_q^d$}, arxiv:1311.4788v1.

\bibitem{CEHIK09} J.~ Chapman, M.~ Erdo\~{g}an, D.~ Hart, A.~ Iosevich, and D.~ Koh, \emph{Pinned distance sets, Wolff's exponent in finite fields and sum-product estimates}, Math.Z., \textbf{271}, (2012),
63--93

\bibitem{CKP14} D. Covert, D. Koh, and Y. Pi, \emph{On the sums of any $k$ points in finite fields},  arXiv:1403.6138.

\bibitem{Er05} M. Erdo\~{g}an, \emph{ A bilinear Fourier extension theorem and applications to the distance set problem,} Internat. Math. Res. Notices {\bf 23} (2005), 1411--1425.

\bibitem{Er46} P.~Erd\H os, \emph{ On sets of distances of $n$ points}, Amer. Math. Monthly 53, (1946), 248--250.

\bibitem{Fa85} K.~ Falconer, \emph{ On the Hausdorff dimension of distance sets,} Mathematika, 32 (1985), 206--212.

\bibitem{GuKa10}  L. Guth and N. Katz, \emph {On the Erd\"os distinct distance problem in the plane}, Annals of Mathematics, Volume 181 (2015), pages 155-190.

\bibitem{HIKR10} D.~ Hart, A.~ Iosevich, D.~ Koh and M.~ Rudnev, \emph{ Averages over hyperplanes, sum-product theory in vector spaces over finite fields and the Erd\"os-Falconer distance conjecture}, Trans. Amer. Math. Soc. Volume 363, Number 6,  (2011),  3255–3275.

\bibitem{IR07} A.~ Iosevich and M. ~Rudnev, \emph{Erd\"{o}s distance problem in vector spaces over finite fields}, Trans. Amer. Math. Soc. 359 (2007), 6127--6142.

\bibitem{KS10} D. ~Koh and C.~ Shen, \emph {Sharp extension theorems and Falconer distance problems for algebraic curves in two dimensional vector spaces over finite fields }, Revista Matematica  Iberoamericana, Vol   28  Issue 1, 2012 159-180.

\bibitem{KS12} D. Koh and H. Sun, \emph{Distance sets of two subsets of   vector spaces over  finite fields}, PAMS, to appear,  arXiv:1212.5305.

\bibitem{LN97} R. Lidl and H. Niederreiter, \emph{ Finite fields,} Cambridge University Press, (1997).

\bibitem{MT04} G. Mockenhaupt, and T. Tao, \emph{Restriction and Kakeya phenomena for finite fields}, Duke Math. J. {\bf 121} (2004), no. 1, 35--74.

\bibitem{SV08} J.~ Solymosi and V.~ Vu, \emph{Near  optimal bounds for the number of distinct distances in high dimensions}, Combinatorica, Vol 28, no 1 (2008), 113--125.

\bibitem{Wo99} T. Wolff, \emph{Recent work connected with the Kakeya problem}, Prospects in mathematics (Princeton, NJ, 1996),  129--162, Amer. Math. Soc., Providence, RI, 1999.

\end{thebibliography}
\end{document}